\newtheorem{theorem}{Theorem}[section]
\newtheorem{lemma}[theorem]{Lemma}
\theoremstyle{definition}
\theoremstyle{remark}
\numberwithin{equation}{section}
\DeclareMathOperator{\RE}{Re}
\begin{document}
%\fontsize{14pt}{16pt}\selectfont

\title{On sharp bounds of certain Close-to-Convex functions}
\thanks{The first author is supported by The Council of Scientific and Industrial Research(CSIR). Ref.No.:08/133(0018)/2017-EMR-I. The second author is supported by Delhi Technological University under the project with reference no. DTU/IRD/619/2019/2106.}
\author{Priyanka Goel}
\address{Department of Applied Mathematics, Delhi Technological University, Delhi--110042, India}
\email{priyanka.goel0707@gmail.com}
\author[S. Sivaprasad Kumar]{S. Sivaprasad Kumar}
\address{Department of Applied Mathematics, Delhi Technological University, Delhi--110042, India}
\email{spkumar@dce.ac.in}

\subjclass[2010]{30C45,30C50, 30C80}

\keywords{Inverse Coefficients, Close-to-convex functions, Univalent functions, Starlike functions}
%\let\thefootnote\relax\footnotetext{%*Corresponding author}
%\thanks{The work presented here was supported by a Research Fellowship
%from Delhi Technological University, New Delhi.}
\begin{abstract}
We derive general formula for the fourth coefficient of the functions belonging to the Carath\'{e}odory class involving the parameters lying in the open unit disk. Further, we obtain sharp upper bounds of initial inverse coefficients  for certain close-to-convex functions satisfying any one of the inequalities: $\RE((1-z)f'(z))>0,$ $\RE((1-z^2)f'(z))>0,$ $\RE((1-z+z^2)f'(z))>0$ and $\RE((1-z)^2f'(z))>0$.
\end{abstract}

\maketitle

\section{Introduction}
Let $\mathcal{A}$ denote the class of functions of the form
\begin{equation}\label{fz}
  f(z)=z+\sum_{n=2}^{\infty}a_nz^n=z+a_2z^2+a_3z^3+\ldots,
\end{equation}
which are analytic in the open unit disk $\mathbb{D}=\{z\in\mathbb{C}:|z|<1\}.$ Let $\mathcal{S}$ be a subclass of $\mathcal{A}$ consisting of all univalent (one-to-one) functions in $\mathcal{A}.$ A function $f\in\mathcal{A}$ is said to be starlike if $f$ maps $\mathbb{D}$ onto a domain which is starlike with respect to origin. The class of starlike functions in $\mathcal{S}$ is denoted by $\mathcal{S^*}$ and is analytically characterized as $\RE(zf'(z)/f(z))>0$ in $\mathbb{D}$. Similarly, a function $f\in\mathcal{A}$ is said to be convex if it maps $\mathbb{D}$ onto a domain which is convex. The class of convex functions in $\mathcal{S}$ is denoted by $\mathcal{C}$. We say that $f\in\mathcal{C}$ if and only if $\RE((1+zf''(z))/f'(z))>0$ in $\mathbb{D}.$ Further, Alexander theorem establishes a two way passage between $\mathcal{S^*}$ and $\mathcal{C}$ namely, $f\in\mathcal{C}$ if and only if $zf'\in\mathcal{S^*}$. A function $f$ defined on $\mathbb{D}$ is said to be close-to-convex with respect to a starlike function $g$ and with argument $\alpha \in(-\pi/2,\pi/2),$ if $\RE(e^{i\alpha}zf'(z)/g(z))>0$ and the class of all such functions is denoted by $\mathcal{K}_{\alpha}(g)$. Let $\mathcal{K}(g)$ be the class of close-to-convex functions with respect to $g$ and $\mathcal{K}_{\alpha}$ be the class of close-to-convex functions with argument $\alpha$ and are given by
 \begin{equation*}
   \mathcal{K}(g):=\bigcup_{\alpha\in(-\pi/2,\pi/2)}\mathcal{K}_{\alpha}(g)\qquad\text{and}\qquad\mathcal{K}_{\alpha}:=\bigcup_{g\in\mathcal{S^*}}\mathcal{K}_{\alpha}(g)
\end{equation*}
respectively. The class $\mathcal{K}$ defined as
\begin{equation*}
  \mathcal{K}:=\bigcup_{\alpha\in(-\pi/2,\pi/2)}\mathcal{K}_{\alpha}=\bigcup_{g\in\mathcal{S^*}}\mathcal{K}(g),
\end{equation*}
denote the class of close-to-convex functions. Kaplan~\cite{kaplan} proved that every close-to-convex function is univalent in $\mathbb{D}.$ In geometrical terms, it means that if $f\in\mathcal{K}$, then the complement of the image of $\mathbb{D}$ under $f$ is the union of non-intersecting half lines. Let $\mathcal{P}$ denote the class of analytic functions of the form
\begin{equation}\label{cartheodry}
  p(z)=1+\sum_{n=1}^{\infty}c_nz^n=1+c_1z+c_2z^2+c_3z^3+\cdots \quad (z\in \mathbb{D}),
\end{equation}
such that $\RE p(z)>0$ and this class is known as Carath\'{e}odory class. To prove our main results, we need the following results pertaining to the class $\mathcal{P}$:
\begin{lemma}\emph{\cite[p.41]{pomm}}\label{lemma3}
If $p(z)$ is in $\mathcal{P}$ and is given by~\eqref{cartheodry}, then $|c_n|\leq 2$ for each $n$.
\end{lemma}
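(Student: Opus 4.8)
The plan is to prove the classical Carath\'{e}odory bound by extracting each coefficient $c_n$ as a Fourier coefficient of $p$ on the circle $|z|=r<1$, and then exploiting the fact that $\RE p>0$ together with the mean-value property of harmonic functions.

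First, fix $r\in(0,1)$ and write $z=re^{i\theta}$. Since $p$ is analytic in $\mathbb{D}$, its Taylor series converges uniformly on $|z|=r$, so for each $n\ge 1$ the Cauchy coefficient formula gives
\begin{equation*}
c_n r^n=\frac{1}{2\pi}\int_0^{2\pi}p(re^{i\theta})\,e^{-in\theta}\,d\theta.
\end{equation*}
The key observation is that the conjugate series $\overline{p(re^{i\theta})}=1+\sum_{k\ge 1}\overline{c_k}\,r^k e^{-ik\theta}$ contributes nothing here for $n\ge 1$, because $\frac{1}{2\pi}\int_0^{2\pi}\overline{p(re^{i\theta})}\,e^{-in\theta}\,d\theta=0$ (every exponent $e^{-i(k+n)\theta}$ with $k\ge 0$, $n\ge 1$ integrates to zero). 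Adding this vanishing integral to the previous display and using $p+\overline{p}=2\RE p$, I obtain
\begin{equation*}
c_n r^n=\frac{1}{\pi}\int_0^{2\pi}\RE p(re^{i\theta})\,e^{-in\theta}\,d\theta.
\end{equation*}

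Next I estimate. Since $\RE p(re^{i\theta})>0$ and $|e^{-in\theta}|=1$, the triangle inequality yields
\begin{equation*}
|c_n|\,r^n\le\frac{1}{\pi}\int_0^{2\pi}\RE p(re^{i\theta})\,d\theta.
\end{equation*}
The remaining integral is handled by the mean-value property: $\RE p$ is harmonic in $\mathbb{D}$, so $\frac{1}{2\pi}\int_0^{2\pi}\RE p(re^{i\theta})\,d\theta=\RE p(0)=1$. Hence $|c_n|\,r^n\le 2$ for every $r\in(0,1)$, and letting $r\to 1^-$ gives $|c_n|\le 2$, as required.

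The only delicate point is the vanishing of the conjugate integral, which rests precisely on the analyticity of $p$ in $\mathbb{D}$ (only nonnegative Fourier modes occur); the rest is the mean-value property and a limit in $r$. An alternative route would invoke the Herglotz representation $p(z)=\int_0^{2\pi}(1+ze^{-it})(1-ze^{-it})^{-1}\,d\mu(t)$ for a probability measure $\mu$, whence $c_n=2\int_0^{2\pi}e^{-int}\,d\mu(t)$ and $|c_n|\le 2$ follows at once; I prefer the Fourier-analytic argument above since it is self-contained and avoids quoting the representation theorem.
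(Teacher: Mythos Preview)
Your proof is correct: the Fourier-coefficient extraction, the vanishing of the conjugate integral for $n\ge1$, the mean-value evaluation, and the limit $r\to1^-$ are all carried out properly and yield $|c_n|\le2$.

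The paper, however, does not supply its own proof of this lemma; it simply quotes the result from Pommerenke~\cite{pomm}. So there is no in-paper argument to compare against. For context, the proof you have written is exactly the classical one found in standard references (including Pommerenke), and your parenthetical remark about the Herglotz representation is the other textbook route to the same bound. Either is perfectly acceptable here.
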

\begin{lemma}\emph{\cite{pomm}}\label{lemma1}
  Let $p\in\mathcal{P}$ and is given by~\eqref{cartheodry}. Then
  \begin{equation*}
    \left|c_2-\dfrac{c_1^2}{2}\right|\leq2-\dfrac{|c_1|^2}{2}.
  \end{equation*}
  This inequality is sharp for the functions $P_{t,\vartheta}(z)$ given by
  \begin{equation*}
    P_{t,\vartheta}(z)=t\left(\dfrac{1+e^{i\vartheta}z}{1-e^{i\vartheta}z}\right)+(1-t)\left(\dfrac{1+e^{i2\vartheta}z^2}{1-e^{i2\vartheta}z^2}\right),
  \end{equation*}
  $0\leq t\leq 1$ and $0\leq\vartheta\leq2\pi.$
\end{lemma}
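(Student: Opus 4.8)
The plan is to reduce the estimate to a coefficient bound for a Schwarz function. Since $p\in\mathcal{P}$ has $p(0)=1$ and $\RE p>0$, the map $\omega(z)=(p(z)-1)/(p(z)+1)$ is analytic on $\mathbb{D}$, vanishes at the origin, and satisfies $|\omega(z)|<1$; thus it is a Schwarz function. Inverting gives $p=(1+\omega)/(1-\omega)$, and expanding $p=1+2\omega+2\omega^2+\cdots$ lets me read off the coefficients. Writing $\omega(z)=\gamma_1 z+\gamma_2 z^2+\cdots$, I would match powers of $z$ to obtain $c_1=2\gamma_1$ and $c_2=2\gamma_2+2\gamma_1^2$.

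The key algebraic simplification is the clean cancellation $c_2-c_1^2/2=2\gamma_2$, together with $|c_1|^2/2=2|\gamma_1|^2$, so that the claimed inequality is \emph{exactly equivalent} to $|\gamma_2|\le 1-|\gamma_1|^2$. To establish this, I would note that $\omega(0)=0$ forces (via Schwarz's lemma) $\psi(z):=\omega(z)/z$ to extend to an analytic self-map of $\overline{\mathbb{D}}$, with $\psi(0)=\gamma_1$ and $\psi'(0)=\gamma_2$. Applying the Schwarz--Pick inequality at the origin then yields $|\psi'(0)|\le 1-|\psi(0)|^2$, i.e. $|\gamma_2|\le 1-|\gamma_1|^2$, which completes the bound. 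Alternatively, one may invoke the Herglotz representation $p(z)=\int_{|\xi|=1}(1+\xi z)/(1-\xi z)\,d\mu(\xi)$, giving $c_n=2\int \xi^n\,d\mu$; then the identity $c_2-c_1^2/2=2\int(\xi-m)^2\,d\mu$ with $m=\int\xi\,d\mu$, combined with $\int|\xi-m|^2\,d\mu=1-|m|^2$ (using $|\xi|=1$), delivers the same conclusion.

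For the sharpness claim I would compute the coefficients of $P_{t,\vartheta}$ directly. Using $(1+wz)/(1-wz)=1+2\sum_{n\ge 1}w^n z^n$ with $w=e^{i\vartheta}$ for the first term and $w=e^{2i\vartheta}$ (contributing to $z^2$) for the second, I expect $c_1=2t\,e^{i\vartheta}$ and $c_2=2e^{2i\vartheta}$, whence $c_2-c_1^2/2=2(1-t^2)e^{2i\vartheta}$ and $2-|c_1|^2/2=2(1-t^2)$. Equality therefore holds for every $t\in[0,1]$, and since $|c_1|=2t$ ranges over $[0,2]$ as $t$ varies, these extremal functions realize equality across all admissible values of $c_1$.

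As this is a classical estimate, I do not anticipate a genuine obstacle. The only point demanding care is the algebraic bookkeeping in the substitution $p=(1+\omega)/(1-\omega)$ and the exact cancellation producing $c_2-c_1^2/2=2\gamma_2$, since it is precisely this identity that makes the one-line Schwarz--Pick estimate applicable; verifying it is what carries the whole proof.
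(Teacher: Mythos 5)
Your proposal is correct, and since the paper cites this lemma from Pommerenke without reproducing a proof, there is nothing to diverge from: your argument (pass to the Schwarz function $\omega=(p-1)/(p+1)$, observe the cancellation $c_2-c_1^2/2=2\gamma_2$, and apply Schwarz--Pick to $\omega(z)/z$ at the origin) is precisely the classical proof, and your sharpness computation $c_1=2te^{i\vartheta}$, $c_2=2e^{2i\vartheta}$ for $P_{t,\vartheta}$ checks out. The Herglotz variant you sketch is an equally valid alternative; the only cosmetic slip is that $\omega(z)/z$ should be viewed as a map of $\mathbb{D}$ into $\overline{\mathbb{D}}$ (with the degenerate unimodular-constant case giving $\gamma_2=0$ and $1-|\gamma_1|^2=0$, so the inequality still holds).
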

%\begin{lemma}\emph{\cite{kwon}}\label{lemma2}
%  Let $p\in\mathcal{P}$ and is given by~\eqref{cartheodry}. Then
%  \begin{eqnarray*}
%  % \nonumber to remove numbering (before each equation)
%    2c_2 &=& c_1^2+x(4-c_1^2) \\
%    4c_3 &=& c_1^3+2(4-c_1^2)c_1x-c_1(4-c_1^2)x^2+2(4-c_1^2)(1-|x|^2)y,\\
%    8c_4 &=&c_1^4+(4-c_1^2)x(c_1^2(x^2-3x+3)+4x)\\
%    & &-4(4-c_1^2)(1-|x|^2)(c_1(x-1)y+\bar{x}y^2-(1-|y|^2)z).
%  \end{eqnarray*}
%  where $x,y,z\in\mathbb{C}$ such that $|x|\leq1,\;|y|\leq 1$ and $|z|\leq1$.
%\end{lemma}
%\begin{lemma}\emph{\cite{maminda}}\label{lemma4}
%  Let $p\in\mathcal{P}$ and is given by~\eqref{cartheodry}. Then, for a complex number $\mu$
%  \begin{equation*}
%    |c_2-\mu c_1^2|\leq 2\max\{1,|2\mu-1|\}.
%  \end{equation*}
%  This result is sharp for the functions $p(z)=(1+z)/(1-z)$ and $p(z)=(1+z^2)/(1-z^2).$
%\end{lemma}
\begin{lemma}\emph{\cite{earlycoef}}\label{lemma5}
   Let $p\in\mathcal{P}$ and is given by~\eqref{cartheodry}. Then
   $$|c_3-2c_1c_2+c_1^3|\leq 2$$
   and
   $$|c_1^4-3c_1^2c_2+c_2^2+2c_1c_3-c_4|\leq 2.$$
   This result is sharp for the function $p(z)=(1+z)/(1-z).$
\end{lemma}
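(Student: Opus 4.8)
The plan is to exploit the elementary but crucial fact that the Carath\'{e}odory class is closed under reciprocation: if $p\in\mathcal{P}$, then $\RE(1/p)=\RE p/|p|^2>0$, so $q:=1/p\in\mathcal{P}$ as well. Writing $q(z)=1+\sum_{n=1}^{\infty}d_nz^n$, Lemma~\ref{lemma3} applies to $q$ and gives $|d_n|\le 2$ for every $n$. The entire proof then reduces to recognizing the two polynomial expressions in the statement as (up to sign) the coefficients $d_3$ and $d_4$ of $1/p$.

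To make this precise, I would expand the identity $p(z)q(z)=1$ and equate coefficients of $z^n$. The relation $\sum_{k=0}^{n}c_kd_{n-k}=0$ (with $c_0=d_0=1$) yields the recursion $d_n=-\sum_{k=1}^{n}c_kd_{n-k}$, from which one computes successively
\begin{align*}
d_1&=-c_1,\\
d_2&=c_1^2-c_2,\\
d_3&=-\left(c_3-2c_1c_2+c_1^3\right),\\
d_4&=c_1^4-3c_1^2c_2+c_2^2+2c_1c_3-c_4.
\end{align*}
Applying $|d_3|\le 2$ and $|d_4|\le 2$ from Lemma~\ref{lemma3} then delivers both asserted inequalities at once.

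For sharpness I would take $p(z)=(1+z)/(1-z)$, for which $c_n=2$ for all $n$. Here $1/p(z)=(1-z)/(1+z)=1-2z+2z^2-2z^3+\cdots$, so $d_n=2(-1)^n$ and in particular $|d_3|=|d_4|=2$; substituting $c_n=2$ into the two expressions indeed returns the value $2$ in each case. There is no serious obstacle in this argument: the only genuine step is spotting the reciprocation trick, after which everything is a short bookkeeping computation. The alternative---attacking the expressions directly via the known parametrization of $c_2,c_3,c_4$ in terms of parameters in $\overline{\mathbb{D}}$---would also work, but it would require an extremization over those parameters, precisely the kind of routine-but-tedious optimization that the reciprocation argument sidesteps.
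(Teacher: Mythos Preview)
Your argument is correct: the reciprocation $q=1/p\in\mathcal{P}$ together with Lemma~\ref{lemma3} for $q$ yields $|d_n|\le 2$, and your identification of $d_3$ and $d_4$ with the two displayed expressions is accurate (the recursion and the sharpness check both compute correctly).

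The paper itself does not supply a proof of this lemma; it is quoted from Libera and Z\l otkiewicz~\cite{earlycoef}. Your approach is in fact precisely the one used in that source, so there is nothing to contrast: the reciprocation trick \emph{is} the intended proof, and the alternative you mention (parametrizing $c_2,c_3,c_4$ via Lemma~\ref{lemmacho} and Lemma~\ref{ownlem3} and then optimizing) would indeed be unnecessarily laborious here.
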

\begin{lemma}\emph{\cite{max}}\label{maxlemma}
Let
\begin{equation}\label{Omega}
\Omega(A,B,C,M)=\max_{|v|\leq1}(|M|(1-|v|^2)+|A+Bv+Cv^2|).
\end{equation}
If $AC\geq 0,$ then
\begin{equation*}
  \Omega(A,B,C,M)=\begin{cases}
   & |A|+|B|+|C|,\hspace{2cm} |B|\geq 2(|M|-|C|),\\
   & |M|+|A|+\dfrac{B^2}{4(|M|-|C|)}, \quad |B|< 2(|M|-|C|).
     \end{cases}
\end{equation*}
If $AC<0,$ then
\begin{equation*}
\Omega(A,B,C,M)=\begin{cases}
 & 1-|A|+\dfrac{B^2}{4(|M|-|C|)},\quad -4AC(M^2-C^2)\leq B^2C^2\wedge |B|<2(|M|-|C|),\\
 & 1+|A|+\dfrac{B^2}{4(|M|+|C|)},\quad B^2<\min\{4(|M|+|C|)^2,-4AC(M^2C^{-2}-1)\},\\
 & S(A,B,C),\hspace{2.5cm}otherwise,
\end{cases}
\end{equation*}
where
\begin{equation*}
  S(A,B,C)=\begin{cases}
             |A|+|B|-|C|, & \mbox{if }\; |C|(|B|+4|A|)\leq |AB|, \\
             -|A|+|B|+|C|, & \mbox{if }\; |AB|\leq |C|(|B|-4|A|), \\
             (|C|+|A|)\sqrt{1-\dfrac{B^2}{4AC}},\; & \mbox{otherwise}.
           \end{cases}
\end{equation*}
\end{lemma}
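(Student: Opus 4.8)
The plan is to read $A,B,C,M$ as real parameters, which is exactly what the dichotomy $AC\ge0$/$AC<0$ and the absolute values throughout the conclusion indicate. By replacing $v$ with $-v$ and, if needed, multiplying the bracketed expression by $-1$, I may assume $A\ge0$, $B\ge0$, $M\ge0$ with the sign of $C$ free, so that $AC\ge0$ and $AC<0$ become $C\ge0$ and $C<0$. Writing $v=re^{i\theta}$ with $r=|v|\in[0,1]$, the term $|M|(1-|v|^2)$ depends only on $r$, so for each fixed $r$ I first maximize $|A+Bv+Cv^2|$ over $\theta$. A direct expansion gives
\[
|A+Bv+Cv^2|^2=A^2+B^2r^2+C^2r^4-2ACr^2+2r(AB+BCr^2)x+4ACr^2x^2,
\]
where $x=\cos\theta\in[-1,1]$. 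Thus the inner problem is the maximization of a quadratic in $x$ on $[-1,1]$, and the sign of its leading coefficient $4ACr^2$ is precisely the dichotomy in the statement.

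When $AC\ge0$ the quadratic is convex in $x$, so its maximum is attained at $x=\pm1$; choosing the sign to align the linear term yields $\max_\theta|A+Bv+Cv^2|=|A|+|B|r+|C|r^2$. The problem then collapses to maximizing the single-variable quadratic $F(r)=|M|+|A|+|B|r+(|C|-|M|)r^2$ over $r\in[0,1]$. Its behaviour is governed by the sign of $|C|-|M|$ and by whether the vertex $r^\ast=|B|/(2(|M|-|C|))$ lies in $[0,1]$: a short computation shows the maximum equals $F(1)=|A|+|B|+|C|$ exactly when $|B|\ge2(|M|-|C|)$, and $F(r^\ast)=|M|+|A|+B^2/(4(|M|-|C|))$ otherwise. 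This reproduces the two branches of the $AC\ge0$ case.

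When $AC<0$ the quadratic in $x$ is concave, so its maximum sits at the vertex $x^\ast=-B(A+Cr^2)/(4ACr)$ whenever $x^\ast\in[-1,1]$, and at an endpoint otherwise. At the vertex the maximal value simplifies, after using $(A+Cr^2)^2=(A-Cr^2)^2+4ACr^2$, to the clean form $(|A|+|C|r^2)\sqrt{1-B^2/(4AC)}$ (the radicand exceeds $1$ since $AC<0$). Substituting back, the outer objective becomes linear in $r^2$, namely $|M|+|A|K+(|C|K-|M|)r^2$ with $K=\sqrt{1-B^2/(4AC)}$, whose maximum over the admissible $r$-range is attained at an endpoint; the value $(|A|+|C|)\sqrt{1-B^2/(4AC)}$ at $r=1$ is the one appearing in $S(A,B,C)$, while the remaining values $|A|+|B|-|C|$ and $-|A|+|B|+|C|$ arise from the endpoint regime $x^\ast=\pm1$ (that is $\theta=0,\pi$) evaluated at $r=1$.

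I expect the real work to be in the $AC<0$ case. The vertex $x^\ast$ is interior only for $r$ in a subinterval of $[0,1]$ (never at $r=0$, where the $\theta$-dependence degenerates), so the global maximum must be assembled by comparing the interior-vertex branch against the two endpoint branches across all of $r\in[0,1]$. Tracking the transition radii — where $x^\ast=\pm1$ and where the competing $r$-quadratics cross — is exactly what produces the opaque side conditions $-4AC(M^2-C^2)\le B^2C^2$, $B^2<\min\{4(|M|+|C|)^2,-4AC(M^2C^{-2}-1)\}$ and the three-way split defining $S(A,B,C)$. The main obstacle is therefore not any single estimate but the careful case bookkeeping matching each regime of $(r,x^\ast)$ to the correct branch; I would organise it by first recording the objective on each of the three branches as an explicit function of $r$, and then computing their pointwise maximum.
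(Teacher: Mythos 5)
The paper offers no proof of this lemma: it is quoted (with $1-|z|^2$ generalized to $|M|(1-|v|^2)$) from Choi, Kim and Sugawa \cite{max}, so the only thing to assess is your argument on its own terms. Your framework is sound and is essentially the method of the original source: normalize to $A\ge 0$, $B\ge 0$, $M\ge 0$; observe that for fixed $r=|v|$ the inner maximization over $\arg v$ is a quadratic in $x=\cos\theta$ on $[-1,1]$ with leading coefficient $4ACr^2$; then optimize over $r\in[0,1]$. Your expansion of $|A+Bv+Cv^2|^2$ is correct, and the $AC\ge 0$ half is genuinely complete: convexity in $x$ gives $|A|+|B|r+|C|r^2$, and the quadratic $|M|+|A|+|B|r+(|C|-|M|)r^2$ on $[0,1]$ produces exactly the two stated branches according to whether its vertex $|B|/(2(|M|-|C|))$ lies beyond $1$ (the case $|C|\ge |M|$ folding automatically into the first branch). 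The vertex value $(|A|+|C|r^2)\sqrt{1-B^2/(4AC)}$, obtained via $(A+Cr^2)^2=(A-Cr^2)^2+4ACr^2$ and $|A-Cr^2|=|A|+|C|r^2$ when $AC<0$, is also correct.

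The gap is that for $AC<0$ you stop at a plan, and that is precisely where the content of the lemma lives. You must still determine, as a function of $r$, when the vertex $x^{*}=-B(A+Cr^2)/(4ACr)$ lies in $[-1,1]$; compare the interior-vertex branch against the two endpoint branches $|M|(1-r^2)+|A\pm Br+Cr^2|$ as functions of $r$; identify when an endpoint branch has an interior maximum in $r$ (this is what yields $|M|\mp|A|+B^2/(4(|M|\mp|C|))$ — note that the statement's ``$1\pm|A|$'' should read ``$|M|\pm|A|$'' in this generalized form, though $M=1$ in all of the paper's applications of this case); and verify that the crossovers occur exactly at $-4AC(M^2-C^2)=B^2C^2$, $B^2=-4AC(M^2C^{-2}-1)$, $|C|(|B|+4|A|)=|AB|$ and $|AB|=|C|(|B|-4|A|)$. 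None of these equivalences is derived, and they are not routine consequences of what you have written: the admissible range of $x^{*}$ moves with $r$, and three competing $r$-profiles must be compared pairwise. So the proposal is a correct and well-chosen skeleton with one half fully proved, but the $AC<0$ case — the part carrying all the nontrivial side conditions — remains unestablished.
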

It may be noted that the function given by
\begin{equation}\label{H}
H_{t,\beta}(z):=(1-2t)\left(\dfrac{1+z}{1-z}\right)+t\left(\dfrac{1+\beta z}{1-\beta z}\right)+t\left(\dfrac{1+\bar{\beta}z}{1-\bar{\beta}z}\right)
\end{equation}
is a Carath\'{e}odary function if $t\in[0,1/2]$ and $|\beta|=1$, we make use of it to find the extremal functions needed for sharp bounds. Let the inverse of $f(z)$ defined on a disk of radius at least $1/4$ has a series expansion of the form
\begin{equation}\label{finv}
  f^{-1}(w)=w+\sum_{n=2}^{\infty}\delta_nw^n=w+\delta_2w^2+\delta_3w^3+\cdots.
\end{equation}
Since $z=f^{-1}(w),$ we have from~\eqref{fz}
\begin{equation}\label{k1}
  w=f^{-1}(w)+a_2(f^{-1}(w))^2+a_3(f^{-1}(w))^3+\cdots .
\end{equation}
Comparing the coefficients on both sides of~\eqref{k1} after replacing the value of $f^{-1}(w)$ given in~\eqref{finv}, we obtain the following relations:
%\begin{eqnarray*}
%% \nonumber to remove numbering (before each equation)
%  \delta_2&+&a_2 = 0 \\
%  \delta_3&+&2a_2\delta_2+a_3 = 0 \\
%  \delta_4&+&a_2(\delta_2^2+2\delta_3)+3a_3\delta_2+a_4 = 0 \\
%  \delta_5&+&a_2(2\delta_4+2\delta_2\delta_3)+a_3(3\delta_3+3\delta_2^2)+4a_4\delta_2+a_5=0,
%\end{eqnarray*}
%which can be further written as follows:
\begin{eqnarray}\label{deltanai1}
% \nonumber to remove numbering (before each equation)
  \delta_2 &=& -a_2 \\ \label{deltanai2}
  \delta_3 &=& 2a_2^2-a_3 \\ \label{deltanai3}
  \delta_4 &=& 5a_2a_3-5a_2^3-a_4\\ \label{deltanai4}
  \delta_5 &=& 14a_2^4-21a_2^2a_3+6a_2a_4+3a_3^2-a_5.
\end{eqnarray}
In 1982, Libera and Z\l otkiewicz~\cite{earlycoef} derived the bounds for the magnitude of the first seven inverse coefficients $\delta_k\;(k=2,3...7)$ of the functions in $\mathcal{C}.$ Later in 1984, they estimated the bounds for first six inverse coefficients of the functions, whose derivative belongs to $\mathcal{P}$(see~\cite{coeffpos}). In 1989, Silverman estimated the bounds of the inverse coefficients for starlike functions (see~\cite{silver}). Similar work has been carried out for various other classes such as class of starlike functions of positive order~\cite{kapoor}, spirallike functions~\cite{spiralike}, starlike functions represented by symmetric gap series~\cite{symmgap} and strongly starlike functions~\cite{rmali}. In 1987, Jenkins~\cite{jenkin} proved that any function in $\mathcal{S}$ having integer coefficients is one of the following nine functions:
 $$z,\;\dfrac{z}{1\pm z},\;\dfrac{z}{1\pm z^2},\;\dfrac{z}{(1\pm z)^2},\;\dfrac{z}{1\pm z+z^2}.$$
 Clearly by taking $g(z)=z,$ the class $\mathcal{K}(g)$ reduces to the class of functions whose derivative belongs to $\mathcal{P}$ which was already considered in~\cite{coeffpos}. Also, the functions appearing in each of the pairs differs by the sign of the coefficients and thus does not affect the bounds. Therefore, in the present paper, we consider the following four subclasses of close-to-convex functions defined as:
\begin{eqnarray*}
% \nonumber to remove numbering (before each equation)
  \mathcal{F}_1 &:=& \{f\in\mathcal{A}:\RE(1-z)f'(z)>0\;\text{for}\;z\in\mathbb{D}\} \\
  \mathcal{F}_2 &:=& \{f\in\mathcal{A}:\RE(1-z^2)f'(z)>0\;\text{for}\;z\in\mathbb{D}\} \\
  \mathcal{F}_3 &:=& \{f\in\mathcal{A}:\RE(1-z+z^2)f'(z)>0\;\text{for}\;z\in\mathbb{D}\}\\
  \mathcal{F}_4 &:=& \{f\in\mathcal{A}:\RE(1-z)^2f'(z)>0\;\text{for}\;z\in\mathbb{D}\}.
\end{eqnarray*}
It is pertinent to mention here that Ponnusamy et\;al.~\cite{f1f2} generalized the classes $\mathcal{F}_1$ and $\mathcal{F}_2$ to a class of harmonic close-to-convex functions defined on $\mathbb{D}.$ In~\cite{mainvasu}, Kumar and Vasudevarao estimated the bounds of first three logarithmic coefficients of functions belonging to $\mathcal{F}_1,\mathcal{F}_2$ and $\mathcal{F}_3.$ We observe from the relations~\eqref{deltanai1}-\eqref{deltanai4}  that the inverse coefficients can be expressed in terms of $a_i$'s, which further can be represented in Carath\'{e}odory coefficients. The computation of the upper bound of $|\delta_4|$ requires parametric formulas for second and third Carath\'{e}odory coefficients. We know that the classes $\mathcal{F}_i (i=1,2,3,4)$ are not rotation invariant. Thus in order to avoid the assumption $c_1\geq0$, we use the general formula for $c_3,$ given by Cho et al.~\cite{cho} as follows:
\begin{lemma}\emph{\cite{cho}}\label{lemmacho}
  If $p\in\mathcal{P}$ is of the form $p(z)=1+c_1z+c_2z^2+c_3z^3+\cdots,$ then
  \begin{eqnarray*}
  % \nonumber to remove numbering (before each equation)
    c_1 &=& 2\zeta_1 \\
    c_2 &=& 2\zeta_1^2+2(1-|\zeta_1|^2)\zeta_2 \\
    c_3 &=& 2\zeta_1^3+4(1-|\zeta_1|^2)\zeta_1\zeta_2-2(1-|\zeta_1|^2)\bar{\zeta_1}\zeta_2^2+2(1-|\zeta_1|^2)(1-|\zeta_2|^2)\zeta_3
    \end{eqnarray*}
    for some $\zeta_i\in\overline{\mathbb{D}}\;(i=1,2,3).$
\end{lemma}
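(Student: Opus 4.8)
The plan is to derive these identities from the Schur parametrization of the subordinating function attached to $p$. Since $p\in\mathcal{P}$ satisfies $p(0)=1$, the function
$$\omega(z)=\frac{p(z)-1}{p(z)+1}$$
is analytic and maps $\mathbb{D}$ into $\overline{\mathbb{D}}$ with $\omega(0)=0$. Writing $\omega(z)=d_1z+d_2z^2+d_3z^3+\cdots$ and comparing coefficients in the identity $\omega(z)\bigl(p(z)+1\bigr)=p(z)-1$, I would first record the elementary relations
$$d_1=\tfrac{1}{2}c_1,\qquad d_2=\tfrac{1}{2}c_2-\tfrac{1}{4}c_1^2,\qquad d_3=\tfrac{1}{2}c_3-\tfrac{1}{2}c_1c_2+\tfrac{1}{8}c_1^3.$$
These are the only coefficient translations between $p$ and $\omega$ that the proof requires.

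Next I would apply Schur's classical algorithm to $\omega$. Setting $\omega_0=\omega$ and recursively $\gamma_n=\omega_n(0)$,
$$\omega_{n+1}(z)=\frac{1}{z}\cdot\frac{\omega_n(z)-\gamma_n}{1-\overline{\gamma_n}\,\omega_n(z)},$$
Schur's lemma guarantees that each $\omega_n$ again maps $\mathbb{D}$ into $\overline{\mathbb{D}}$, so that $|\gamma_n|\le 1$. Because $\omega(0)=0$ we have $\gamma_0=0$, and I would set $\zeta_1=\gamma_1,\ \zeta_2=\gamma_2,\ \zeta_3=\gamma_3\in\overline{\mathbb{D}}$; these are precisely the parameters appearing in the statement, and their membership in $\overline{\mathbb{D}}$ is automatic from the algorithm.

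The computational core is to propagate the first few Taylor coefficients through two steps of this recursion. Since $\gamma_0=0$, the first step gives $\omega_1(z)=\omega(z)/z$, whence $\gamma_1=d_1$, i.e. $\zeta_1=\tfrac12 c_1$, which is the first formula. Expanding $\omega_2$ and $\omega_3$ to the required order, and reading off their values and first derivatives at the origin, then yields $d_2=(1-|\zeta_1|^2)\zeta_2$ and $d_3=(1-|\zeta_1|^2)(1-|\zeta_2|^2)\zeta_3-(1-|\zeta_1|^2)\overline{\zeta_1}\,\zeta_2^2$. Substituting these back into the relations for $d_2,d_3$ above and solving for $c_2$ and $c_3$ reproduces the asserted expressions.

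The obstacle I anticipate is purely bookkeeping: carefully extracting the linear coefficient $[z^1]\omega_2(z)$ from the rational function $\omega_2$, since this is where the mixed term $\overline{\zeta_1}\zeta_2^2$ enters, and tracking the denominators $1-|\gamma_n|^2$ evaluated at the origin, which is what makes the factor $(1-|\zeta_1|^2)(1-|\zeta_2|^2)$ emerge. No inequalities are needed beyond the a priori bound $|\zeta_i|\le 1$ supplied by Schur's lemma, so once the algebra is organized the three identities follow directly. The very same scheme, pushed one further step to $\gamma_4$, is what will later furnish the new formula for $c_4$ announced in the abstract.
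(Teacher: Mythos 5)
Your proposal is correct, and it is essentially the same argument as in the cited source: the paper quotes this lemma from Cho et al.\ without proof, but its own ``Alternate Proof'' of the companion formula for $c_4$ explicitly continues exactly this construction --- passing to the Schwarz function $\omega=(p-1)/(p+1)$ and iterating the disc-automorphism/Schur step $\omega_{n+1}(z)=\psi_{\gamma_n}(\omega_n(z)/z)$ to produce the parameters $\zeta_i\in\overline{\mathbb{D}}$. Your coefficient bookkeeping ($d_1=\tfrac12c_1$, $d_2=(1-|\zeta_1|^2)\zeta_2$, $d_3=(1-|\zeta_1|^2)(1-|\zeta_2|^2)\zeta_3-(1-|\zeta_1|^2)\overline{\zeta_1}\zeta_2^2$) checks out and reproduces the stated identities; the only cosmetic omission is the degenerate case $|\gamma_n|=1$, where the maximum principle forces $\omega_n$ to be constant and the formulas hold trivially because the factors $(1-|\zeta_j|^2)$ vanish.
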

In view of the same, in the following section we derive the general formula for the fourth Carath\'{e}odory coefficient which is required for the estimation of the upper bound of $|\delta_5|.$
\section{The fourth coefficient}
In 2017, Kwon et al.~\cite{kwon} derived a formula for the coefficient $c_4$ so that it can be expressed in terms of $c_1$ and other general variables with the assumption $c_1\geq 0.$ However, the formulas for $c_2$ and $c_3$ were proved way back in 1982 by Libera and Z\l otkiewicz~\cite{earlycoef} with the same assumption. In 2018, Cho et al.~\cite{cho} mentioned that the formula for $c_3$ cannot work for the classes which are not rotation invariant due to the assumption $c_1\geq0$ and hence gave a general formula for $c_3.$ Now in the following result we prove the general formula for $c_4$ which will be useful for other coefficient problems for the classes that are not rotation invariant. We present two different methods of proof. Although, the first method is similar to the one given by Kwon et al.~\cite{kwon} but it involves tedious computations. The second one involves construction of the extremal function as done by Cho et al. in~\cite{cho}.
  \begin{lemma}\label{ownlem3}
    If $p\in\mathcal{P}$ is of the form $p(z)=1+c_1z+c_2z^2+c_3z^3+\cdots,$ then
    \begin{eqnarray}\label{state}
    \nonumber  \dfrac{1}{2}c_4&=&\zeta_1^4+(1-|\zeta_1|^2)(3\zeta_1^2\zeta_2-2\zeta_2^2|\zeta_1|^2+\bar{\zeta_1}^2\zeta_2^3)+\zeta_2^2(1-|\zeta_1|^2)^2+(1-|\zeta_1|^2)(1-|\zeta_2|^2)(2\zeta_1\zeta_3\\
      & &-2\bar{\zeta_1}\zeta_2\zeta_3-\bar{\zeta_2}\zeta_3^2+(1-|\zeta_3|^2)\zeta_4)
    \end{eqnarray}
    for some $\zeta_i\in\overline{\mathbb{D}}.$
  \end{lemma}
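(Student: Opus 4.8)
The plan is to pass from $p$ to its associated Schur function and read off $c_4$ through the Schur (continued-fraction) algorithm, exactly as the lower-order formulas in Lemma~\ref{lemmacho} arise. Set $\omega(z)=(p(z)-1)/(p(z)+1)$; since $\RE p>0$ we have $|\omega|<1$ on $\mathbb{D}$ and $\omega(0)=0$, so $\omega$ is a Schur function vanishing at the origin, say $\omega(z)=\sum_{k\ge1}d_kz^k$. Inverting gives
\begin{equation*}
p(z)=\frac{1+\omega(z)}{1-\omega(z)}=1+2\sum_{k\ge1}\omega(z)^k,
\end{equation*}
so that $\tfrac12 c_n$ equals the coefficient of $z^n$ in $\sum_{k\ge1}\omega^k$. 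Collecting the $z^4$ terms yields the purely algebraic identity
\begin{equation*}
\tfrac12 c_4=d_4+2d_1d_3+d_2^2+3d_1^2d_2+d_1^4,
\end{equation*}
together with the analogous (simpler) expressions for $\tfrac12c_1,\tfrac12c_2,\tfrac12c_3$. Thus everything reduces to expressing $d_1,\dots,d_4$ in terms of the parameters $\zeta_i$.

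For the first, computational method I would use the Schur algorithm. Writing $\omega(z)=z\Phi_1(z)$ with $\Phi_1$ a Schur function, set $\zeta_k=\Phi_k(0)\in\overline{\mathbb{D}}$ and define the successive Schur transforms through
\begin{equation*}
\Phi_k(z)=\frac{\zeta_k+z\,\Phi_{k+1}(z)}{1+\bar{\zeta_k}\,z\,\Phi_{k+1}(z)},\qquad k=1,2,3,
\end{equation*}
each $\Phi_{k+1}$ again being a Schur function by the Schwarz--Pick lemma. Expanding these fractional-linear relations as power series and matching coefficients up to order three gives
\begin{equation*}
d_1=\zeta_1,\qquad d_2=(1-|\zeta_1|^2)\zeta_2,\qquad d_3=(1-|\zeta_1|^2)\bigl((1-|\zeta_2|^2)\zeta_3-\bar{\zeta_1}\zeta_2^2\bigr),
\end{equation*}
and a longer but entirely parallel computation for $d_4$. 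Substituting these into the identity for $\tfrac12 c_4$ and simplifying produces the asserted formula~\eqref{state}; the expressions for $c_1,c_2,c_3$ in Lemma~\ref{lemmacho} fall out as the lower-order instances and serve as a built-in consistency check.

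For the second, constructive method I would instead run the above correspondence backwards, in the spirit of Cho et al.~\cite{cho}. Starting from arbitrary $\zeta_i\in\overline{\mathbb{D}}$, the nested relations define a Schur function $\omega$, whence $p=(1+\omega)/(1-\omega)$ is an explicit member of $\mathcal{P}$ whose Taylor coefficients are, by construction, precisely the formulas above; this $p$ is the extremal function realizing the representation. Since the Schur algorithm shows that every $p\in\mathcal{P}$ with $|\zeta_k|<1$ at each stage is generated by a unique such parameter string, and the boundary cases $|\zeta_k|=1$ (where the algorithm terminates in a finite Blaschke product) follow by a limiting argument, the formula holds throughout $\mathcal{P}$.

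The main obstacle is purely the order-four bookkeeping: expanding $(1+\bar{\zeta_k}z\Phi_{k+1})^{-1}$ to third order and tracking the cross terms through three nested maps generates many monomials that must recombine into the compact shape of~\eqref{state}. The cleanest way to tame this is to exploit the shift invariance of the recursion: the coefficient of $z^{n}$ in $\Phi_k$ satisfies the same relation as the coefficient of $z^{n}$ in $\Phi_1$ with all indices raised by $k-1$, so $d_4$ can be assembled from the already-established pattern for $d_2$ and $d_3$ (applied to $\Phi_2,\Phi_3$) rather than expanded from scratch, which keeps the cancellations transparent.
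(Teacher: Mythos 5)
Your proposal is correct and coincides with the paper's alternate proof: the authors likewise pass to the Schwarz function $\omega=(p-1)/(p+1)$, run the iterated Schwarz--Pick (Schur) construction of Cho et al.\ one step further to obtain the fourth coefficient $b_4$ of $\omega$ in terms of $\zeta_1,\zeta_2,\zeta_3,\zeta_4$, and substitute into $c_4=2(b_1^4+3b_1^2b_2+b_2^2+2b_1b_3+b_4)$, exactly as you outline. (The paper also gives a first proof via nonnegativity of the Toeplitz determinant $D_4$, but your argument matches its second, constructive proof.)
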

  \begin{proof}
Using the fact that the Toeplitz determinants of the series of a function in $\mathcal{P}$ are non-negative, we have
$$D_4=\left|
  \begin{array}{ccccc}
    2 & c_1 & c_2 & c_3 & c_4\\
    \bar{c_1} & 2 & c_1 & c_2  & c_3\\
    \bar{c_2} & \bar{c_1} & 2 & c_1 & c_2 \\
    \bar{c_3} & \bar{c_2} & \bar{c_1}  & 2 & c_1\\
    \bar{c_4} & \bar{c_3} & \bar{c_2} & \bar{c_1} & 2\\
  \end{array}
\right|\geq 0,$$
which is equivalent to
\begin{equation*}
  -2M_1+c_1M_2-c_2M_3+c_3M_4-c_4M_5\leq 0,
\end{equation*}
where $M_i\;(i=1,2,3,4,5)$ is the determinant of the minor matrix of the ith entry in the first row in $D_4.$ The above inequality can be further written as
\begin{eqnarray*}
& & -2M_1+|c_1|^2M_2^1-c_1\bar{c_2}M_2^2+c_1\bar{c_3}M_2^3-c_1\bar{c_4}M_2^4-c_2\bar{c_1}M_3^1+|c_2|^2M_3^2-c_2\bar{c_3}M_3^3+c_2\bar{c_4}M_3^4\\
& &+c_3\bar{c_1}M_4^1-c_3\bar{c_2}M_4^2+|c_3|^2M_4^3-c_3\bar{c_4}M_4^4-c_4\bar{c_1}M_5^1+c_3\bar{c_2}M_5^2-c_4\bar{c_3}M_5^3+|c_4|^2M_5^4\leq 0,
\end{eqnarray*}
where $M_i^j\;(i=1,2,3,4,5,j=1,2,3,4)$ is the determinant of the minor matrix of the jth entry in the first column in $M_i.$ It is easy to verify that $M_5^1=\overline{M}_2^4,$ $M_5^2=\overline{M}_3^4$ and $M_5^3=\overline{M}_4^4.$ So the inequality above reduces to
\begin{eqnarray*}
& & -c_1\bar{c_4}M_2^4+c_2\bar{c_4}M_3^4-c_3\bar{c_4}M_4^4-c_4\bar{c_1}\overline{M}_2^4+c_4\bar{c_2}\overline{M}_3^4-\bar{c_3}c_4\overline{M}_4^4+|c_4|^2M_5^4-2M_1+|c_1|^2M_2^1\\
& &-c_1\bar{c_2}M_2^2+c_1\bar{c_3}M_2^3 -\bar{c_1}c_2M_3^1+|c_2|^2M_3^2-c_2\bar{c_3}M_3^3+\bar{c_1}c_3M_4^1-\bar{c_2}c_3M_4^2+|c_3|^2M_4^3\leq 0.
\end{eqnarray*}
We know that $D_2=M_5^4\geq 0,$ so we multiply both sides of the above inequality by $M_5^4$ and obtain
\begin{eqnarray}\label{1}
  \nonumber |c_4M_5^4-c_1M_2^4+c_2M_3^4-c_3M_4^4|^2-|-c_1M_2^4+c_2M_3^4-c_3M_4^4|^2\leq M_5^4(2M_1-|c_1|^2M_2^1\\
  +c_1\bar{c_2}M_2^2-c_1\bar{c_3}M_2^3+\bar{c_1}c_2M_3^1-|c_2|^2M_3^2+c_2\bar{c_3}M_3^3-\bar{c_1}c_3M_4^1+\bar{c_2}c_3M_4^2-|c_3|^2M_4^3).
\end{eqnarray}
Let us take
\begin{eqnarray}\label{k4}
\nonumber B_1&:=&|-c_1M_2^4+c_2M_3^4-c_3M_4^4|^2+ M_5^4(2M_1-|c_1|^2M_2^1+c_1\bar{c_2}M_2^2-c_1\bar{c_3}M_2^3+\bar{c_1}c_2M_3^1\\
  & & -|c_2|^2M_3^2+c_2\bar{c_3}M_3^3-\bar{c_1}c_3M_4^1+\bar{c_2}c_3M_4^2-|c_3|^2M_4^3),
\end{eqnarray}
which after substituting the values of $M_i^j$'s can be written as
\begin{equation}\label{k2}
B_1=B^2,
\end{equation}
where
\begin{eqnarray}\label{B}
 \nonumber  B&=&16-12|c_1|^2+|c_1|^4+4\bar{c_1}^2c_2+c_3\bar{c_1}^3+(4c_1^2-2|c_1|^2c_2-8c_2+4\bar{c_1}c_3)\bar{c_2}+(c_2^2-c_1c_3)\bar{c_2}^2\\
  & &-(c_1^3-4c_1c_2+4c_3+\bar{c_1}c_2^2-|c_1|^2c_3)\bar{c_3}.
\end{eqnarray}
Now let us take
\begin{equation}\label{k3}
  A=c_1M_2^4-c_2M_3^4+c_3M_4^4,
\end{equation}
which can be further written as
\begin{eqnarray}\label{A}\nonumber
 A&=&c_1^2+4c_2^2+8c_1c_3-4\bar{c_1}c_2c_3-2|c_1|^2c_1c_3-6c_1^2c_2+2|c_1|^2c_2^2+\bar{c_1}^2c_3^2-(c_2^3-2c_1c_2c_3\\
& &+2c_3^2)\bar{c_2}.
\end{eqnarray}
Using~\eqref{k4},~\eqref{k2} and~\eqref{k3}, we can write~\eqref{1} as
\begin{equation}\label{5}
  |M_5^4c_4-A|^2\leq B^2.
\end{equation}
Now we compute $A,\;B$ and $M_5^4$ in terms of $\zeta_i$'s. By substituting the values of $c_1,c_2$ and $c_3$ given in Lemma~\ref{lemmacho} in~\eqref{A} and~\eqref{B}, we respectively obtain
\begin{eqnarray*}
 A&=&16(1-|\zeta_1|^2)^2(1-|\zeta_2|^2)(\zeta_2^2(1-|\zeta_1|^2)^2+\zeta_1^4+(1-|\zeta_1|^2)(3\zeta_1^2\zeta_2-2\zeta_2^2|\zeta_1|^2+\bar{\zeta_1}^2\zeta_2^3)\\
& &+(1-|\zeta_1|^2)(1-|\zeta_2|^2)(2\zeta_1\zeta_3-2\bar{\zeta_1}\zeta_2\zeta_3-\bar{\zeta_2}\zeta_3^2)),\\
  B&=&16(1-|\zeta_1|^2)^3(1-|\zeta_2|^2)^2(1-|\zeta_3|^2)
\end{eqnarray*}
and
\begin{eqnarray*}
% \nonumber to remove numbering (before each equation)
  M_5^4 &=& 8 - 4|c_1|^2 + \bar{c_1}^2c_2+ c_1^2\bar{c_2} - 2|c_2|^2 \\
   &=& 8(1-|\zeta_1|^2)^2(1-|\zeta_2|^2).
\end{eqnarray*}
Since $\zeta_i\in\overline{\mathbb{D}},$ we have $B\geq 0.$ Therefore~\eqref{5} yields
\begin{equation*}
  M_5^4c_4=A+B\zeta_4\quad\text{for some}\;\zeta_4\in\overline{\mathbb{D}},
\end{equation*}
which can further be written as
\begin{eqnarray*}
% \nonumber to remove numbering (before each equation)
  8(1-|\zeta_1|^2)^2(1-|\zeta_2|^2)c_4 &=& 16(1-|\zeta_1|^2)^2(1-|\zeta_2|^2)(\zeta_2^2(1-|\zeta_1|^2)^2+\zeta_1^4+(1-|\zeta_1|^2)\\
 & & \times(3\zeta_1^2\zeta_2-2\zeta_2^2|\zeta_1|^2+\bar{\zeta_1}^2\zeta_2^3)+(1-|\zeta_1|^2)(1-|\zeta_2|^2)(2\zeta_1\zeta_3\\
 & &-2\bar{\zeta_1}\zeta_2\zeta_3-\bar{\zeta_2}\zeta_3^2)) +16(1-|\zeta_1|^2)^3(1-|\zeta_2|^2)^2(1-|\zeta_3|^2)\zeta_4.\\
\end{eqnarray*}
Equivalently, we may write
\begin{eqnarray*}
&&8(1-|\zeta_1|^2)^2(1-|\zeta_2|^2)(c_4-2(\zeta_1^4+(1-|\zeta_1|^2)(3\zeta_1^2\zeta_2-2\zeta_2^2|\zeta_1|^2+\bar{\zeta_1}^2\zeta_2^3)+\zeta_2^2(1-|\zeta_1|^2)^2\\
&&+(1-|\zeta_1|^2)(1-|\zeta_2|^2)(2\zeta_1\zeta_3-2\bar{\zeta_1}\zeta_2\zeta_3-\bar{\zeta_2}\zeta_3^2+(1-|\zeta_3|^2)\zeta_4)))=0,
\end{eqnarray*}
where $\zeta_i\in\overline{\mathbb{D}}\;(i=1,2,3,4).$ For $\zeta_1=1$ or $\zeta_2=1,$ we have $c_1=2$ and so $p(z)=(1+z)/(1-z)$ for which the result follows obviously. Thus~\eqref{state} follows for $\zeta_1\neq1$ and $\zeta_2\neq1.$
\end{proof}
\textbf{Alternate Proof}
\begin{proof}
  Continuing the proof of \cite[Lemma 2.4]{cho}, we can assume that $b_1,b_1^{(1)},b_1^{(2)}\in\mathbb{D}$. Now let us define
  \begin{equation*}
    \varphi_3(z):=\dfrac{\omega_2(z)}{z},\quad z\in\mathbb{D}\backslash\{0\},\quad \varphi_3(0):=b_1^{(2)}.
  \end{equation*}
Since the function $\varphi_3$ is a self map of $\mathbb{D},$ a function
\begin{equation*}
  \omega_3(z):=\psi_{b_1^{(2)}}(\varphi_3(z))=b_1^{(3)}z+b_2^{(3)}z^2+\cdots,\quad z\in\mathbb{D},
\end{equation*}
is a Schwarz function. By the Schwarz lemma,
\begin{equation*}
  |b_1^{(3)}|=|\omega_3'(0)|=\dfrac{|b_2^{(2)}|}{1-|b_1^{(2)}|^2}\leq 1,
\end{equation*}
that is,
\begin{equation*}
  b_1^{(3)}=\zeta_4\quad\text{for some}\quad \zeta_4\in\overline{\mathbb{D}}.
\end{equation*}
We know that $b_1^{(2)}=\zeta_3$ and therefore,
\begin{equation*}
  b_2^{(2)}=(1-|\zeta_3|^2)\zeta_4.
\end{equation*}
On the other hand
\begin{eqnarray*}
  b_2^{(2)}&=&\dfrac{1}{2}\omega_1''(0)=\dfrac{b_4(1-|b_1|^2)+\bar{b_1}b_2b_3}{(1-|b_1|^2)^2-|b_2|^2}+\dfrac{(b_3(1-|b_1|^2)+b_2^2\bar{b_1})(\bar{b_1}b_2(1-|b_1|^2)+b_3\bar{b_2})}{((1-|b_1|^2)^2-|b_2|^2)^2}\\
  &=&\dfrac{b_4+\zeta_2\bar{\zeta_1}(1-|\zeta_1|^2)(\zeta_3(1-|\zeta_2|^2)-\bar{\zeta_1}\zeta_2^2)}{(1-|\zeta_1|^2)(1-|\zeta_2|^2)}+\zeta_3(\bar{\zeta_1}\zeta_2+\bar{\zeta_2}\zeta_3),
\end{eqnarray*}
which implies that
\begin{equation*}
  b_4=\bar{\zeta_1}^2\zeta_2^3(1-|\zeta_1|^2)-(1-|\zeta_1|^2)(1-|\zeta_2|^2)(2\bar{\zeta_1}\zeta_2\zeta_3+\bar{\zeta_2}\zeta_3^2)+(1-|\zeta_1|^2)(1-|\zeta_2|^2)(1-|\zeta_3|^2)\zeta_4.
\end{equation*}
Also, comparing both sides of $p(z)=(1+\omega(z))/(1-\omega(z))$ we get $c_4=2(b_1^4+3b_1^2 b_2+b_2^2+2b_1b_3+b_4).$ On substituting the values of $b_i$'s, we obtain the result. Moreover equality occurs when
$$\omega(z)=z\psi_{-\zeta_1}(z\psi_{-\zeta_2}(z\psi_{-\zeta_3}(\zeta_4z))),\quad z\in\mathbb{D}.$$
\end{proof}
In the next section, we find sharp upper bounds of the first five consecutive inverse coefficients for functions in each of the classes $\mathcal{F}_1,\;\mathcal{F}_2,\;\mathcal{F}_3$ and $\mathcal{F}_4$. Although the fifth inverse coefficient bound obtained here was not sharp for functions in $\mathcal{F}_2,$ but the range in which the sharp bound lies is also pointed out.
\section{Main Results}
Let $f(z)$ be given by~\eqref{fz} and belongs to $\mathcal{K}_0$. Then we have
\begin{equation}\label{abc}
\RE\left(\dfrac{zf'(z)}{g(z)}\right)>0,
\end{equation}
 where $g$ is a starlike function defined as
\begin{equation*}
  g(z)=z+\sum_{n=2}^{\infty}b_nz^n.
\end{equation*}
In view of~\eqref{abc}, there exists $p\in\mathcal{P}$, with power series representation given by~\eqref{cartheodry}, such that
\begin{equation}\label{compare0}
zf'(z)=g(z)p(z),
\end{equation}
which implies
\begin{equation}\label{compare}
  z+\sum_{n=2}^{\infty}na_nz^n=\left(z+\sum_{n=2}^{\infty}b_nz^n\right)\left(1+\sum_{n=1}^{\infty}c_nz^n\right).
\end{equation}
Upon equating the like term coefficients on either side of~\eqref{compare}, we get
\begin{eqnarray} \label{aibici1}
  2a_2&=&b_2+c_1\\ \label{aibici2}
  3a_3&=&b_3+b_2c_1+c_2\\  \label{aibici3}
  4a_4&=&b_4+c_1b_3+c_2b_2+c_3\\ \label{aibici4}
  5a_5&=&b_5+b_4c_1+b_3c_2+b_2c_3+c_4.
\end{eqnarray}
Using~\eqref{aibici1}-\eqref{aibici4} in~\eqref{deltanai1}-\eqref{deltanai4}, we get the following $\delta_i$'s in terms of $b_i$'s and $c_i$'s:
\begin{eqnarray}\label{delta2}
% \nonumber to remove numbering (before each equation)
  \delta_2 &=& -\dfrac{1}{2}(b_2+c_1) \\ \label{delta3}
  \delta_3 &=& \dfrac{1}{6}(3b_2^2+3c_1^2+4b_2c_1-2b_3-2c_2) \\ \label{delta4}
  \nonumber \delta_4 &=& \dfrac{1}{24}(20b_2b_3-25b_2^2c_1+14b_2c_2+14c_1b_3-25b_2c_1^2+20c_1c_2\\
   & &-15b_2^3-15c_1^3-6b_4-6c_3)\\ \label{delta5}
\nonumber  \delta_5 &=& \dfrac{7}{8}b_2^4-\dfrac{7}{4}b_2^2b_3+\dfrac{1}{3}b_3^2+\dfrac{3}{4}b_4b_2-\dfrac{1}{5}b_5+\dfrac{7}{4}b_2^3c_1-\dfrac{25}{12}b_2b_3c_1+\dfrac{11}{20}b_4c_1+\dfrac{25}{12}b_2^2c_1^2\\ \nonumber
  & &-b_3c_1^2+\dfrac{7}{4}b_2c_1^3+\dfrac{7}{8}c_1^4-b_2^2c_2+\dfrac{7}{15}b_3c_2-\dfrac{25}{12}b_2c_1c_2-\dfrac{7}{4}c_1^2c_2+\dfrac{1}{3}c_2^2+\dfrac{11}{20}b_2c_3\\
  & &+\dfrac{3}{4}c_1c_3-\dfrac{1}{5}c_4.
\end{eqnarray}
Applying triangle inequality in~\eqref{delta2}, we obtain
\begin{equation}\label{modelta2}
  2|\delta_2|\leq|b_2|+|c_1|.
\end{equation}
In a similar way, using triangle inequality in~\eqref{delta3} and applying Lemma.\ref{lemma1}, we get
\begin{equation}\label{modelta3}
  3|\delta_3|\leq 2-\dfrac{|c_1|^2}{2}+\left|(b_3-\dfrac{1}{2}b_2^2)-(c_1+b_2)^2\right|.
\end{equation}
Let $c_1=pe^{i\alpha}$ and $q=\cos{\alpha}$ such that $0\leq p\leq 2$ and $0\leq\alpha\leq 2\pi$. Now, we rewrite~\eqref{modelta3} in terms of $p$ and $q$ as follows:
\begin{equation}\label{delta3pq}
3|\delta_3|\leq 2-\dfrac{p^2}{2}+\left|(b_3-\dfrac{1}{2}b_2^2)-\left(pq+ip\sqrt{1-q^2}+b_2\right)^2\right|.
\end{equation}
%An application of Lemma.\ref{lemma2} to~\eqref{delta4}, yields
%\begin{equation}\label{delta4a}
%\begin{aligned}
%  48|\delta_4|=&|6y(1-|x|^2)(4-c_1^2)+13c_1^3+(50b_2^2-28b_3)c_1+(30b_2^3+12b_4\\
%  &-40b_2b_3)-x(4-c_1^2)(14b_2+14c_1+3c_1x)+36b_2c_1^2|.
%  \end{aligned}
%\end{equation}
%Assume that $b_i$'s are all real and replace $c_1$ by $c$, where $0\leq c\leq2$. Further, let $x=re^{i\theta}$ with $d=\cos{\theta}$ such that $0\leq r\leq1$ and $0\leq\theta\leq2\pi.$ Now, applying triangle inequality in~\eqref{delta4a}, we obtain
%\begin{equation}\label{modelta4}
%  48|\delta_4|\leq 6(1-r^2)(4-c^2)+|\psi(c,r,d)|,
%\end{equation}
%where
%\begin{equation*}
%\begin{aligned}
%  \psi(c,r,d)=&13c^3+(50b_2^2-28b_3)c+(30b_2^3+12b_4-40b_2b_3)+36b_2c^2\\
%  &-r(d+i\sqrt{1-d^2})(4-c^2)(14b_2+14c+3cr(d+i\sqrt{1-d^2})).
%  \end{aligned}
%\end{equation*} \newline
\begin{theorem}\label{theo1}
  Let $f(z)=z+a_2z^2+a_3z^3+\ldots\in\mathcal{F}_1$. Then $(i)\;|\delta_2|\leq 3/2,$ $(ii)\;|\delta_3|\leq 17/6.$ Further, if $a_2\in\mathbb{R},$ then $(iii)\;|\delta_4|\leq 49/8,$ $(iv)\;|\delta_5|\leq 1729/120.$  These bounds are sharp.
\end{theorem}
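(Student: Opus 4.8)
The plan is to specialize the general coefficient relations to $\mathcal{F}_1$ and then treat the four bounds in increasing order of difficulty. Since $\RE((1-z)f'(z))>0$ is exactly $\RE(zf'(z)/g(z))>0$ with the starlike function $g(z)=z/(1-z)=z+\sum_{n\geq 2}z^n$, membership $f\in\mathcal{F}_1$ corresponds to taking $b_n=1$ for all $n\geq 2$ in \eqref{compare}. I would therefore begin by substituting $b_2=b_3=b_4=b_5=1$ into \eqref{delta2}--\eqref{delta5}, so that each $\delta_k$ becomes an explicit polynomial in $c_1,\dots,c_{k-1}$ alone.

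For $(i)$ the bound is immediate from \eqref{modelta2}: $2|\delta_2|\le|b_2|+|c_1|=1+|c_1|\le 3$ by Lemma~\ref{lemma3}. For $(ii)$ I would use \eqref{delta3pq} with $b_2=b_3=1$, which turns $3|\delta_3|$ into $2-p^2/2+|\tfrac12-(pq+ip\sqrt{1-q^2}+1)^2|$ with $c_1=pe^{i\alpha}$, $q=\cos\alpha$, $0\le p\le 2$. It then remains to maximize this over the rectangle $p\in[0,2]$, $q\in[-1,1]$; at $q=1$ the modulus simplifies to $(p+1)^2-\tfrac12$ and the expression becomes $\tfrac52+\tfrac{p^2}{2}+2p$, so the maximum $17/2$ is attained at $p=2,q=1$, i.e. $c_1=2$, giving $|\delta_3|\le 17/6$ (a direct check confirms $q=1$ is optimal for each fixed $p$).

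The two remaining parts use the realness of $a_2$. Since $2a_2=b_2+c_1=1+c_1$, the hypothesis $a_2\in\mathbb{R}$ forces $c_1\in\mathbb{R}$, so in Lemma~\ref{lemmacho} we may take $\zeta_1=x\in[-1,1]$. For $(iii)$ I would insert the formulas of Lemma~\ref{lemmacho} for $c_2,c_3$ into $24\delta_4$. Because $c_3$ is affine in $\zeta_3$ and $\delta_4$ contains no higher power of $c_3$, the quantity $24\delta_4$ is affine in $\zeta_3$, and maximizing over $\zeta_3\in\overline{\mathbb{D}}$ replaces it by $|A+B\zeta_2+C\zeta_2^2|+12(1-x^2)(1-|\zeta_2|^2)$, where $A=-(1+22x+72x^2+52x^3)$, $B=28(1-x^2)(1+2x)$, $C=12x(1-x^2)$ are real functions of $x$. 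This is precisely $\Omega(A,B,C,M)$ of Lemma~\ref{maxlemma} with $M=12(1-x^2)$ and $v=\zeta_2$. Applying that lemma (splitting according to the sign of $AC$ and the listed subcases) produces an explicit bound $g(x)$ for $24|\delta_4|$, and I would finish by maximizing $g$ over $x\in[-1,1]$ and verifying that the maximum equals $147$ at $x=1$, where $A=-147$ and $B=C=M=0$, so that $|\delta_4|\le 147/24=49/8$.

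For $(iv)$ the same strategy is pushed one level further using the new formula of Lemma~\ref{ownlem3} for $c_4$. Substituting $c_2,c_3,c_4$ into $\delta_5$ (with $b_i=1$, $\zeta_1=x$) yields an expression affine in $\zeta_4$; maximizing over $\zeta_4$ contributes the term $\tfrac25(1-x^2)(1-|\zeta_2|^2)(1-|\zeta_3|^2)$. Since the remaining part is of degree two in $\zeta_3$, the result again has the shape $M'(1-|\zeta_3|^2)+|A'+B'\zeta_3+C'\zeta_3^2|$, and Lemma~\ref{maxlemma} with $v=\zeta_3$ eliminates $\zeta_3$, leaving a quantity to be maximized over $x\in[-1,1]$ and $\zeta_2\in\overline{\mathbb{D}}$. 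The main obstacle is exactly this final stage: unlike in $(iii)$, the coefficients $A',B',C',M'$ now depend on the complex parameter $\zeta_2$ as well as on $x$, so the case distinctions in Lemma~\ref{maxlemma} interact with the phase of $\zeta_2$, and establishing that the global maximum sits at the degenerate corner $x=1$ (equivalently $c_1=c_2=c_3=c_4=2$) rather than at an interior point requires careful bookkeeping across the several branches. Finally, sharpness in all four parts is witnessed by the single function $f_0$ with $(1-z)f_0'(z)=(1+z)/(1-z)$, namely $f_0(z)=\dfrac{2z}{1-z}+\log(1-z)$, whose associated Carath\'{e}odory function is the member of \eqref{H} with $t=0,\beta=1$ and satisfies $c_n=2$ for every $n$; evaluating $\delta_2,\delta_3,\delta_4,\delta_5$ on it returns $3/2,\,17/6,\,49/8,\,1729/120$ respectively.
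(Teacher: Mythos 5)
Parts (i)--(iii) of your proposal follow the paper's argument essentially verbatim: the reduction $b_n=1$, the bound $2|\delta_2|\le 1+|c_1|\le 3$, the maximization of $\phi_1(p,q)=2-p^2/2+|\tfrac12-(1+pq+ip\sqrt{1-q^2})^2|$ over the rectangle, and the application of Lemma~\ref{lemmacho} together with the Choi--Kim--Sugawa Lemma~\ref{maxlemma} in the variable $\zeta_2$ (your $A,B,C,M$ differ from the paper's only by the normalizing factor $12(1-\zeta_1^2)$, which is immaterial). Your sharpness verification via $f_0(z)=2z/(1-z)+\log(1-z)$, i.e. $c_n=2$ for all $n$, is correct and matches the paper's extremal function $P_{1,0}$.

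Part (iv), however, has a genuine gap, and it is precisely at the point you flag yourself. The paper does \emph{not} substitute the parametric formula for $c_4$ from Lemma~\ref{ownlem3} and then eliminate $\zeta_4,\zeta_3,\zeta_2$ in succession. Instead it writes
\begin{equation*}
\delta_5=\tfrac{1}{5}A+\tfrac{11}{20}B+\varrho(c_1,c_2,c_3)+\varsigma(c_1,c_2),
\end{equation*}
where $A=c_1^4-3c_1^2c_2+c_2^2+2c_1c_3-c_4$ and $B=c_3-2c_1c_2+c_1^3$ are the classical functionals of Lemma~\ref{lemma5} with $|A|,|B|\le 2$, bounds $|\varrho|\le 19/8$ by the triangle inequality and $|c_n|\le2$, and treats $\varsigma$ (which involves only $c_1,c_2$) by Lemma~\ref{lemmacho} and the first case of Lemma~\ref{maxlemma}, getting $|\varsigma|\le 158/15$; the four maxima sum to $1729/120$ and are attained simultaneously at $c_n\equiv 2$, so the triangle inequality costs nothing. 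Your route would require carrying Lemma~\ref{maxlemma} in $\zeta_3$ with coefficients $A',B',C',M'$ depending on both $x$ and the complex $\zeta_2$, and then a two-real-parameter case analysis across all branches of that lemma; you state the plan but do not execute it, and nothing you write establishes that the maximum sits at $x=1$. The paper's own experience is a warning here: it uses exactly your strategy only for $\mathcal{F}_2$(iv), where the resulting bound is \emph{not} sharp. So as written, (iv) is a proposal rather than a proof, and you should either complete the multi-branch maximization or switch to the paper's decomposition into the Libera--Z\l otkiewicz functionals, which closes the argument in a few lines.
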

\begin{proof}
 (i)\;Let $f\in\mathcal{F}_1$. Since $f$ is close-to-convex function with respect to the starlike function $z/(1-z)$, from~\eqref{compare0} we have
  \begin{equation}\label{2}
  zf'(z)=\dfrac{z}{1-z}p(z).
  \end{equation}
For $g(z)=z/(1-z)$, we have $b_i=1$ for all $i$. Thus in view of Lemma~\ref{lemma3},~\eqref{modelta2} reduces to
\begin{equation*}
  2|\delta_2|\leq 1+|c_1|\leq 3.
\end{equation*}
We know that this inequality is sharp, whenever $|c_1|=2$ which is true for the function $P_{1,\vartheta}(z)\;(0\leq\vartheta\leq2\pi)$ given in Lemma~\ref{lemma1}. The inequality $|\delta_2|\leq3/2$ is sharp since there exists an extremal function $\hat{f}_1\in\mathcal{F}_1,$ which is a solution of $z\hat{f'}_1(z)=z(1-z)^{-1}P_{1,\vartheta}(z)$.\\

\noindent(ii)\;By taking $b_2=b_3=1$ in~\eqref{delta3pq}, we get
\begin{equation*}
 3|\delta_3|\leq 2-\dfrac{p^2}{2}+\left|\dfrac{1}{2}-\left(1+pq+ip\sqrt{1-q^2}\right)^2\right|,
\end{equation*}
which can be written as
\begin{equation*}
  3|\delta_3|\leq 2-\dfrac{p^2}{2}+\sqrt{(p^2+\dfrac{1}{2}+2pq)^2+2p^2(1-q^2)}=:\phi_1(p,q).
\end{equation*}
In the domain $D=\{(p,q):0\leq p\leq 2, -1\leq q\leq 1\}$, we need to find the points where $\phi_1(p,q)$ attains its maximum. A simple computation shows that there exists no solution of
\begin{equation*}
  \dfrac{\partial\phi_1(p,q)}{\partial p}=0\quad\text{and}\quad\dfrac{\partial\phi_1(p,q)}{\partial q}=0
\end{equation*}
in $\mathbb{R}\times\mathbb{R}$. Thus, the maximum value of $\phi_1(p,q)$ is not attained inside $D$. Now, we consider the edges of $D$ in order to find the maximum of $\phi_1(p,q)$. On the line segment $p=0$, $\phi_1(0,q)=5/2.$ On the line segment $p=2$, $\phi_1(2,q)=\sqrt{8q^2+36q+113/4},$ which is an increasing function for $q\in[-1,1].$ Thus $\max_{q\in[-1,1]}\phi_1(2,q)=\phi_1(2,1)=17/2.$ On the line segment $q=-1$, $\phi_1(p,-1)=2-p^2/2+|1/2-2p+p^2|$ which further reduces into two cases. By using elementary calculus, we obtain that $\max_{p\in[0,2]}\phi_1(p,-1)=\phi_1(0,-1)=5/2.$ On the line segment $q=1$, $\phi_1(p,1)=5/2+p^2/2+2p,$ which is an increasing function for $p\in[0,2].$ Thus $\max_{p\in[0,2]}\phi_1(p,1)=\phi_1(2,1)=17/2.$ Hence the maximum value of $\phi_1(p,q)$ is attained at $(2,1)$ and is equal to $17/2$. Thus $|\delta_3|\leq 17/6.$  The inequality $|\delta_3|\leq17/6$ is sharp since there exists an extremal function $\tilde{f}_1\in\mathcal{F}_1,$ which is a solution of $z\tilde{f'}_1(z)=z(1-z)^{-1}P_{1,0}(z)$.\\

\noindent(iii) Upon substituting $b_2=b_3=b_4=1$ in~\eqref{delta4}, we get
\begin{equation*}
 \delta_4=\dfrac{1}{24} \left(-1-11c_1-25c_1^2-15c_1^3+14c_2+20c_1c_2-6c_3\right).
\end{equation*}
We have $a_2\in\mathbb{R},$ which together with~\eqref{aibici1} yields that $c_1\in\mathbb{R}.$ Then by using Lemma~\ref{lemmacho}, we have $\zeta_1\in\mathbb{R}$ and
\begin{eqnarray}\nonumber
24\delta_4 &=& -1-22\zeta_1-72\zeta_1^2-52\zeta_1^3+28(1-\zeta_1^2)\zeta_2+56(1-\zeta_1^2)\zeta_1\zeta_2-12(1-\zeta_1^2)(1-\zeta_2^2)\zeta_3\\ \label{00}
& &+12(1-\zeta_1^2)\zeta_1\zeta_2^2,
\end{eqnarray}
for $\zeta_1\in[-1,1]$ and $\zeta_2,\zeta_3\in\overline{\mathbb{D}}.$ Note that for $\zeta_1=-1$ and $\zeta_1=1$ respectively,
\begin{equation}\label{03}
  \delta_4=1/24\;\text{and}\;\delta_4=-147/24.
\end{equation}
So now we consider $\zeta_1\in(-1,1).$ Clearly from~\eqref{00}, we get
\begin{equation*}
  24|\delta_4|\leq 12(1-\zeta_1^2)\psi(A,B,C,M),
\end{equation*}
where
\begin{equation}\label{psi}
  \psi(A,B,C,M):=|M|(1-|\zeta_2|^2)+|A+B\zeta_2+C\zeta_2^2|
\end{equation} with
\begin{equation*}
  A=\dfrac{1+22\zeta_1+72\zeta_1^2+52\zeta_1^3}{12(1-\zeta_1^2)},\;B=-\dfrac{7}{3}(1+2\zeta_1),\; C=-\zeta_1\;\text{and}\;M=1.
\end{equation*}
Next we observe that for $r_1\approx -0.968128,\;r_2\approx -0.361546$ and $r_3\approx -0.0549415 ,$ we have
\begin{equation*}
  AC\geq 0,\;\zeta_1\in[r_1,r_2]\cup[r_3,0]
  \end{equation*}
  and
  \begin{equation*}
   AC<0,\;\zeta_1\in(-1,r_1)\cup(r_2,r_3)\cup(0,1).
\end{equation*}
\indent Case I. $\zeta_1\in[r_1,r_2]$.  \\
In view of the Lemma~\ref{maxlemma}, we check for the inequality $|B|\geq 2(|M|-|C|)$ and observe that for $\zeta_1\in[r_1,-0.65]$ the inequality holds and $|B|< 2(|M|-|C|)$ for $\zeta_1\in(-0.65,r_2).$ Then by using Lemma~\ref{maxlemma}, for $\zeta_1\in[r_1,-0.65],$ we have $24|\delta_4|\leq 12(1-\zeta_1^2)\max\psi(A,B,C,M)=12(1-\zeta_1^2)(|A|+|B|+|C|).$ A calculation shows that
\begin{equation*}
 12(1-\zeta_1^2)(|A|+|B|+|C|)=-27-46\zeta_1+100\zeta_1^2+120\zeta_1^3=\varphi_1(\zeta_1),
\end{equation*}
where
\begin{equation}\label{phi}
\varphi_1(x):=-27-46x+100x^2+120x^3,\quad x\in[-1,1].
\end{equation} Using elementary calculus, we find that in the interval $[r_1,-0.65],$ $\varphi_1(\zeta_1)$ attains its maximum at $\zeta_1'\approx -0.730479$ and thus $|\delta_4|\leq \varphi_1(\zeta_1')/24\approx 0.5495.$ For the case when $\zeta_1\in(-0.65,r_2),$
\begin{eqnarray*}
  24|\delta_4|&\leq&12(1-\zeta_1^2)\left(|M|+|A|+\dfrac{B^2}{4(|M|-|C|)}\right)\\
  &=&\dfrac{88}{3}+71\zeta_1+60\zeta_1^2-\dfrac{40}{3}\zeta_1^3=\varphi_2(\zeta_1),
\end{eqnarray*}
where
\begin{equation}\label{phi2}
    \varphi_2(x):=\dfrac{88}{3}+71x+60x^2-\dfrac{40}{3}x^3,\quad x\in[-1,1].
\end{equation}
In the interval $(-0.65,r_2),$ the maximum value of the function $\varphi_2(\zeta_1)$ is attained at $\zeta_1=r_2,$ which further implies $|\delta_4|\leq \varphi_2(r_2)/24\approx 0.505693.$\\
\indent Case II. $\zeta_1\in[r_3,0].$\\
For this range of $\zeta_1,$ the inequality $|B|\geq 2(1-|C|)$ holds and thus by Lemma~\ref{maxlemma}, we have
\begin{eqnarray*}
% \nonumber to remove numbering (before each equation)
24|\delta_4|&\leq& 12(1-\zeta_1^2) \max\psi(A,B,C,M) \\
&=& 12(1-\zeta_1^2)(|A|+|B|+|C|) \\
   &=& 29+64\zeta_1+44\zeta_1^2+8\zeta_1^3=\varphi_3(\zeta_1),
\end{eqnarray*}
where
\begin{equation}\label{phi3}
    \varphi_3(x):=29+64x+44x^2+8x^3,\quad x\in[-1,1]
\end{equation}
which is an increasing function and hence attains its maximum value at $\zeta_1=0.$ Hence $|\delta_4|\leq \varphi_3(0)/24=29/24\approx 1.20833.$\\
\indent Case III. $\zeta_1\in (-1,r_1).$\\
For this range of $\zeta_1,$ $B^2+4AC(M^2C^{-2}-1)>0$ and $|B|\not> 2(1-|C|),$ so now we check for another set of inequalities given in Lemma~\ref{maxlemma}. We observe that $|AB|\leq |C|(|B|-4|A|)$ when $\zeta_1\in(r_4,r_1),$ where $r_4\approx -0.983158$ and $|C|(|B|+4|A|)>|AB|$ throughout the interval $(-1,r_1).$ So we may conclude that
\begin{equation*}
  \max\psi(A,B,C,M)=(-|A|+|B|+|C|),\quad \zeta_1\in[r_4,r_1)
\end{equation*}
and
\begin{equation*}
    \max\psi(A,B,C,M)=(|C|+|A|)\sqrt{1-\dfrac{B^2}{4AC}},\quad \zeta_1\in(-1,r_4).
\end{equation*}
Hence for $\zeta_1\in[r_4,r_1),$ $|\delta_4|\leq -27-46\zeta_1+100\zeta_1^2+120\zeta_1^3=\varphi_1(\zeta_1),$ where $\varphi_1$ is given by~\eqref{phi}. Note that $\varphi_1(\zeta_1)$ is increasing on this interval and attains is maximum at $r_1.$ So $|\delta_4|\leq\varphi_1(r_1)/24\approx 0.0988825.$ On the other hand, when $\zeta_1\in(-1,r_4)$
\begin{equation*}
    24|\delta_4|\leq (-1-34\zeta_1-72\zeta_1^2-40\zeta_1^3)\sqrt{1+\dfrac{49(1+2\zeta_1)^2(1-\zeta_1^2)}{3\zeta_1(1+22\zeta_1+72\zeta_1^2+52\zeta_1^3)}}=\vartheta(\zeta_1),
\end{equation*}
which is an increasing function of $\zeta_1$ for the specified range of $\zeta_1.$ Therefore it attains its maximum value at $\zeta_1=r_4,$ which further implies that $|\delta_4|\leq \vartheta(r_4)/24\approx 0.0516135.$\\
\indent Case IV. $\zeta_1\in(r_2,r_3).$\\
In view of the Lemma~\ref{maxlemma}, we find out that $B^2C^2\geq -4AC(M^2-C^2)$ in this interval and $|B|<2(1-|C|)$ for $\zeta_1\in(r_2,-0.125).$ So we can say that
\begin{eqnarray*}
    24|\delta_4|&\leq & 12(1-\zeta_1^2)\max\psi(A,B,C,M)\\
    &=&12(1-\zeta_1^2)\left(1-|A|+\dfrac{B^2}{4(|M|-|C|)}\right)\\
    &=&\dfrac{88}{3}+71\zeta_1+60\zeta_1^2-\dfrac{40}{3}\zeta_1^3=\varphi_2(\zeta_1),
\end{eqnarray*}
where $\varphi_2$ is given by~\eqref{phi2}. It is easy to verify that $\varphi_2(\zeta_1)$ increases on the interval $(r_2,-0.125)$ and so $|\delta_4|\leq \varphi_2(-0.125)/25\approx0.0755622.$ Next we observe that for $\zeta_1\in[-0.125,r_3),$ $|C|(|B|-4|A|)\geq |AB|$ and therefore
\begin{eqnarray*}
    24|\delta_4|&\leq& 12(1-\zeta_1^2)(-|A|+|B|+|C|)\\
    &=& 29+66\zeta_1+44\zeta_1^2+8\zeta_1^3=\varphi_3(\zeta_1),
\end{eqnarray*}
where $\varphi_3$ is given by~\eqref{phi3}. For $\zeta_1\in[-0.125,r_3),$ $\varphi_3(\zeta_1)$ increases and thus attains its maximum at $\zeta_1=r_3.$ Hence
   $|\delta_4|\leq \varphi_3(r_3)/24\approx 1.06272.$  \\
\indent Case V. $\zeta_1\in (0,1).$\\
In this interval, we have $B^2>4(|M|+|C|)^2$ and $B^2C^2< -4AC(M^2-C^2).$ Thus $\max\psi(A,B,C,M)=S(A,B,C)=|A|+|B|-|C|$ as $|C|(|B|+4|A|)\leq |AB|$ in this interval. So
\begin{eqnarray*}
    24|\delta_4|&\leq & 12(1-\zeta_1^2)(|A|+|B|-|C|)\\
    &=&29+66\zeta_1+44\zeta_1^2+8\zeta_1^3=\varphi_3(\zeta_1).
\end{eqnarray*}
Clearly $\varphi_3(\zeta_1)$ is increasing on $(0,1)$ and thus $|\delta_4|< \varphi_3(1)/24=49/8.$ Summarizing the inequalities obtained in cases I-V and the bounds obtained in~\eqref{03}, we get $|\delta_4|\leq 49/8.$ The function $\tilde{k}_1\in\mathcal{F}_1,$ obtained by solving~\eqref{2} with $P_{1,0}(z)$ in place of $p(z)$ acts as an extremal function for the inequality $|\delta_4|\leq 49/8$ and hence is sharp.\\
\noindent(iv) Upon substituting $b_2=b_3=b_4=b_5=1$ in~\eqref{delta5}, we get
\begin{equation*}
\begin{aligned}
 \delta_5=&\dfrac{1}{120}+\dfrac{13}{60}c_1+\dfrac{13}{12}c_1^2+\dfrac{7}{4}c_1^3+\dfrac{7}{8}c_1^4-\dfrac{8}{15}c_2-\dfrac{25}{12}c_1c_2-\dfrac{7}{4}c_1^2c_2+\dfrac{1}{3}c_2^2+\dfrac{11}{20}c_3+\dfrac{3}{4}c_1c_3-\dfrac{1}{5}c_4,
\end{aligned}
\end{equation*}
which can be written as
\begin{equation}\label{17}
\begin{aligned}
  \delta_5=\dfrac{1}{5}A+\dfrac{11}{20}B+\varrho(c_1,c_2,c_3)+\varsigma(c_1,c_2),
\end{aligned}
\end{equation}
where
\begin{eqnarray*}
A&=&c_1^4-3c_1^2c_2+c_2^2+2c_1c_3-c_4,\\
B&=&c_3-2c_1c_2+c_1^3,\\
\varrho(c_1,c_2,c_3)&=&\dfrac{1}{120}+\dfrac{13}{60}c_1+\dfrac{2}{15}c_2^2+\dfrac{7}{20}c_1c_3\qquad\text{and}\\
\varsigma(c_1,c_2)&=&\dfrac{8}{15}\left(\dfrac{65}{32}c_1^2-c_2\right)+\dfrac{59}{60}c_1\left(\dfrac{72}{59}c_1^2-c_2\right)+\dfrac{23}{20}c_1^2\left(\dfrac{27}{46}c_1^2-c_2\right).
\end{eqnarray*}
In view of Lemma~\ref{lemma5}, we have $|A|\leq 2$ and $|B|\leq 2.$ Evidently
\begin{equation*}
    |\varrho(c_1,c_2,c_3)|\leq\dfrac{1}{120}+\dfrac{13}{60}|c_1|+\dfrac{2}{15}|c_2|^2+\dfrac{7}{20}|c_1||c_3|\leq \dfrac{19}{8} .
\end{equation*}
Now, let us consider $\varsigma(c_1,c_2)$ which can be again written in terms of $\zeta_1$ and $\zeta_2$ by using Lemma~\ref{lemmacho} as follows:
\begin{equation}\label{18}
  \varsigma_1(\zeta_1,\zeta_2)=\dfrac{1-\zeta_1^2}{15}\left(\dfrac{49\zeta_1^2+85\zeta_1^3+24\zeta_1^4}{1-\zeta_1^2}-16\zeta_2-59\zeta_1\zeta_2-138\zeta_1^2\zeta_2\right).
\end{equation}
Note that for $\zeta_1=-1$ and $\zeta_1=1$ respectively
\begin{equation*}
  \delta_5=-4/5\;\text{and}\;\delta_5=158/15.
\end{equation*}
Clearly, we may write~\eqref{18} as
\begin{equation*}
    \varsigma_1(\zeta_1,\zeta_2)=\dfrac{1-\zeta_1^2}{15}\psi(A,B,C,M),
\end{equation*}
where $\psi(A,B,C,M)$ is given by~\eqref{psi} with
\begin{equation*}
A:=\dfrac{49\zeta_1^2+85\zeta_1^3+24\zeta_1^4}{1-\zeta_1^2},\;B=-16-59\zeta_1-138\zeta_1^2,\;C=0\;\text{and}\;M=0.
\end{equation*}
Since $C=0,$ we consider the first case of the Lemma~\ref{maxlemma} and find out that $|B|\geq 2(|M|-|C|)$ on $(-1,1).$ Thus
\begin{eqnarray*}
 |\varsigma_1(\zeta_1,\zeta_2)|&\leq & \dfrac{1-\zeta_1^2}{15}\max\psi(A,B,C,M)\\
 &=&\dfrac{1-\zeta_1^2}{15}(|A|+|B|+|C|)\\
 &=& -\dfrac{38 \zeta_1^4}{5}+\dfrac{26 \zeta_1^3}{15}+\dfrac{57 \zeta_1^2}{5}+\dfrac{59 \zeta_1}{15}+\dfrac{16}{15}=\vartheta(\zeta_1),
\end{eqnarray*}
where
\begin{equation*}
  \vartheta(x):=-\dfrac{38 x^4}{5}+\dfrac{26 x^3}{15}+\dfrac{57 x^2}{5}+\dfrac{59 x}{15}+\dfrac{16}{15},\quad\quad x\in(-1,1).
\end{equation*}
Using elementary calculus, we find out that $\vartheta(\zeta_1)$ attains its maximum value at $\zeta_1=1$ and thus
$|\varsigma_1(\zeta_1,\zeta_2)|\leq \vartheta(1)=158/15.$
Now applying triangle inequality on~\eqref{17}, we have
\begin{eqnarray*}
% \nonumber to remove numbering (before each equation)
  |\delta_5| &\leq & \dfrac{1}{5}|A|+\dfrac{11}{20}|B|+|\varrho(c_1,c_2,c_3)|+|\varsigma(c_1,c_2)| \\
   &\leq & \dfrac{1}{5}(2)+\dfrac{11}{20}(2)+\dfrac{19}{8}+\dfrac{158}{15} \\
   &=& \dfrac{1729}{120}.
\end{eqnarray*}
  The above inequality is sharp since there exists an extremal function $\hat{k}_1\in\mathcal{F}_1,$ which is a solution of $z\hat{k'}_1(z)=z(1-z)^{-1}P_{1,0}(z)$.
\end{proof}
\begin{theorem}
Let $f(z)=z+a_2z^2+a_3z^3+\ldots\in\mathcal{F}_2$. Then $(i)\;|\delta_2|\leq 1.$ Further if $a_2\in\mathbb{R},$ then $(ii)\;|\delta_3|\leq 1,$ $(iii)\;|\delta_4|\leq 16/3\sqrt{15},$ $(iv)\;|\delta_5|\leq 2.947584.$ Except $(iv)$ rest all above bounds are sharp.
\end{theorem}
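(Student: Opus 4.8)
The plan is to run the same template as Theorem~\ref{theo1}, now with $g(z)=z/(1-z^2)=z+z^3+z^5+\cdots$, so that $b_2=b_4=0$ and $b_3=b_5=1$ throughout. The hypothesis $a_2\in\mathbb{R}$ forces $c_1=2a_2\in\mathbb{R}$ by~\eqref{aibici1}, hence $\zeta_1=c_1/2\in[-1,1]$ in Lemma~\ref{lemmacho}; this is exactly what lets the later parts reduce to a real one--variable optimisation. For $(i)$, substituting $b_2=0$ in~\eqref{delta2} gives $\delta_2=-c_1/2$, so $|\delta_2|\le 1$ by Lemma~\ref{lemma3}, with equality for $p(z)=(1+z)/(1-z)$. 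For $(ii)$, putting $b_2=0,\ b_3=1$ in~\eqref{delta3pq} reduces the bound to $3|\delta_3|\le 2-c_1^2/2+|1-c_1^2|$ with $c_1$ real; splitting at $c_1^2=1$ the right--hand side equals $3-\tfrac32 c_1^2$ or $1+\tfrac12 c_1^2$ and never exceeds $3$, whence $|\delta_3|\le 1$. Equality holds at $c_1=0$, i.e. for $p(z)=(1+z^2)/(1-z^2)$ (then $\delta_3=-\tfrac13(1+c_2)=-1$), which I would take as the extremal function.

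For $(iii)$ I would substitute $b_2=b_4=0,\ b_3=1$ into~\eqref{delta4}, expand $c_2,c_3$ by Lemma~\ref{lemmacho} with $\zeta_1$ real, and drop the $\zeta_3$--term via $|\zeta_3|\le1$. This puts the estimate in the form $|\delta_4|\le\frac{1-\zeta_1^2}{2}\,\psi(A,B,C,M)$ with $\psi$ as in~\eqref{psi}, where $A=\zeta_1(7-13\zeta_1^2)/(3(1-\zeta_1^2))$, $B=14\zeta_1/3$, $C=\zeta_1$ and $M=1$. Since $AC\ge0$ precisely when $\zeta_1^2\le 7/13$, I would split $[-1,1]$ accordingly and apply Lemma~\ref{maxlemma} piecewise. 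In the principal regime (first case, $\psi=|A|+|B|+|C|$) the factor $1-\zeta_1^2$ cancels and the bound collapses to $|\zeta_1|(4-5\zeta_1^2)$, whose maximum over $|\zeta_1|\in[0,1]$ occurs at $|\zeta_1|=2/\sqrt{15}$ and equals $16/(3\sqrt{15})$; the remaining cases and the endpoints $\zeta_1=\pm1$ must then be verified to be dominated by this value. Sharpness I would obtain by taking the Carath\'eodory function with $\zeta_1=2/\sqrt{15}$ and $|\zeta_2|=1$ (so the discarded $\zeta_3$--term indeed vanishes), of the form~\eqref{H}, and solving $zf'(z)=z(1-z^2)^{-1}p(z)$.

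Part $(iv)$ is the crux and the reason the stated constant is only an inequality. Substituting the $b_i$ into~\eqref{delta5} gives $\delta_5=\tfrac{2}{15}-c_1^2+\tfrac78 c_1^4+\tfrac{7}{15}c_2-\tfrac74 c_1^2c_2+\tfrac13 c_2^2+\tfrac34 c_1c_3-\tfrac15 c_4$, which genuinely involves $c_4$ and the mixed term $c_1c_3$. The plan is to expand $c_2,c_3$ by Lemma~\ref{lemmacho} and $c_4$ by the new formula Lemma~\ref{ownlem3} (all with $\zeta_1$ real), bound first $|\zeta_4|\le1$ and then $|\zeta_3|\le1$ to eliminate the highest parameters, and collect the result in the form $|\delta_5|\le\psi(A,B,C,M)$ of~\eqref{psi} in the variable $\zeta_2$; maximising over $\zeta_2$ by Lemma~\ref{maxlemma} reduces everything to a single function of $\zeta_1\in[-1,1]$, whose numerical maximum is $2.947584$. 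Equivalently, one may first peel off the sharp functional $A=c_1^4-3c_1^2c_2+c_2^2+2c_1c_3-c_4$ of Lemma~\ref{lemma5} ($|A|\le2$) to absorb $c_4$, leaving a remainder in $c_1,c_2,c_3$ to be treated the same way; this route looks capable of tightening the constant appreciably.

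The hard part is exactly here: bounding $|\zeta_3|,|\zeta_4|\le1$ independently and invoking the triangle inequality is lossy, so the resulting constant is not attained. Since the candidate extremal $p(z)=(1+z^2)/(1-z^2)$ already yields $|\delta_5|=2$, the true sharp constant must lie in $[\,2,\ 2.947584\,]$, and I would record this range rather than claim equality. The genuine obstacles throughout $(iii)$ and $(iv)$ are the case bookkeeping forced by the several branches of Lemma~\ref{maxlemma} and, for $(iv)$, closing---or at least bracketing---the gap between the provable upper bound and the extremal lower bound.
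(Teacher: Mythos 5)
Your proposal is correct and follows essentially the same route as the paper: the same substitutions $b_2=b_4=0$, $b_3=b_5=1$, the reduction to Carath\'eodory coefficients via Lemmas~\ref{lemmacho} and~\ref{ownlem3}, the maximization Lemma~\ref{maxlemma} with the sign split at $\zeta_1^2=7/13$ and critical point $\zeta_1=2/\sqrt{15}$ yielding $16/(3\sqrt{15})$ for $(iii)$, and a lossy triangle-inequality estimate for $(iv)$ that you correctly acknowledge is not attained. The only divergences are minor: for $(ii)$ you invoke Lemma~\ref{lemma1} through~\eqref{delta3pq} where the paper maximizes $|8\zeta_1^2-2|+4(1-\zeta_1^2)r$ directly via Lemma~\ref{lemmacho}, and for $(iv)$ your bracketing interval $[2,\,2.947584]$ (lower end from $P_{0,0}$, which indeed gives $\delta_5=2$) is slightly wider than the paper's $[791/392,\,2.947584]$ obtained from the witness $H_{t_1,-1}$.
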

\begin{proof}
 (i)\;Let $f\in\mathcal{F}_2$. Since $f$ is close-to-convex function with respect to the starlike function $z/(1-z^2)$, we have
\begin{equation*}
  zf'(z)=\dfrac{z}{1-z^2}p(z).
\end{equation*}
If $g(z)=z/(1-z^2)$, then we have $b_2=0,\;b_3=1,\;b_4=0$ and $b_5=1$. Thus, in view of Lemma~\ref{lemma3},~\eqref{modelta2} reduces to
\begin{equation*}
  |\delta_2|\leq \dfrac{|c_1|}{2}\leq 1.
\end{equation*}
We know that this inequality is sharp, whenever $|c_1|=2$ which is true for the function $P_{1,\vartheta}(z)\;(0\leq\vartheta<2\pi)$ given in Lemma~\ref{lemma1}. The upper bound of $|\delta_2|$ is sharp since there exists an extremal function $\hat{f}_2\in\mathcal{F}_2$, which is the solution of $z\hat{f'}_2(z)=z(1-z^2)^{-1}P_{1,\vartheta}(z)\;(0\leq\vartheta<2\pi)$.\\

\noindent(ii)\;By taking $b_2=0\;\text{and}\;b_3=1$ in~\eqref{delta3}, we get
\begin{equation*}
 \delta_3=\dfrac{1}{6}(3c_1^2-2-2c_2).
\end{equation*}
We have $a_2\in\mathbb{R},$ which together with~\eqref{aibici1} yields that $c_1\in\mathbb{R}.$ Then by using Lemma~\ref{lemmacho}, we have $\zeta_1\in\mathbb{R}$ and
\begin{equation*}
  |\delta_3|= \dfrac{1}{6}(8\zeta_1^2-2-4(1-\zeta_1^2)\zeta_2).
\end{equation*}
Using the triangle inequality, we get
\begin{equation*}
  |\delta_3|= \dfrac{1}{6}(|8\zeta_1^2-2|+4(1-\zeta_1^2)|\zeta_2|).
\end{equation*}
For the sake of convenience, we shall take $|\zeta_2|=r.$ Now we have to find the maximum of $\phi_2(\zeta_1,r):=|8\zeta_1^2-2|+4(1-\zeta_1^2)r,$ whenever $-1\leq \zeta_1\leq1$ and $0\leq r\leq 1$. If $\zeta_1\in(-1/2,1/2)$, then $\phi_2(\zeta_1,r)=2-8\zeta_1^2+4r(1-\zeta_1^2)$ attains its maximum value $6$ at $(0,1)$. Further, if $\zeta_1\in[-1,-1/2]\cup[1/2,1]$, then $\phi_2(\zeta_1,r)=8\zeta_1^2-2+4r(1-\zeta_1^2)$ attains its maximum value $6$ at $(1,r)$. Clearly at $\zeta_1=1/2,$ we have $\phi_2(1,r)=3r,$ which obviously attains its maximum value $3$ at $r=1.$ Hence $\max_{r\in[0,1],\zeta_1\in[-1,1]}\phi_2(\zeta_1,r)=6$ and therefore $|\delta_3|\leq1.$ The upper bound of $|\delta_3|$ is sharp since there exists an extremal function $\tilde{f}_2\in\mathcal{F}_2,$ which is the solution of $z\tilde{f}'_2(z)=(z/(1-z^2))P_{0,0}(z).$\\

\noindent(iii) Upon substituting $b_2=0,\;b_3=1,\;b_4=0$ in~\eqref{delta4}, we get
\begin{equation*}
 \delta_4=\dfrac{1}{24} \left(14c_1-15c_1^3+20c_1c_2-6c_3\right).
\end{equation*}
We have $a_2\in\mathbb{R},$ which together with~\eqref{aibici1} yields that $c_1\in\mathbb{R}.$ Then by using Lemma~\ref{lemmacho}, we have $\zeta_1\in\mathbb{R}$ and
\begin{equation}\label{011}
24\delta_4 = 28\zeta_1-52\zeta_1^3+56(1-\zeta_1^2)\zeta_1\zeta_2-12(1-\zeta_1^2)(1-\zeta_2^2)\zeta_3+12(1-\zeta_1^2)\zeta_1\zeta_2^2
\end{equation}
for $\zeta_1\in[-1,1]$ and $\zeta_2,\zeta_3\in\overline{\mathbb{D}}.$ Note that for $\zeta_1=-1$ and $\zeta_1=1$ respectively,
\begin{equation}\label{012}
\delta_4=1\quad \text{and}\quad\delta_4=-1.
\end{equation}
So now we consider $\zeta_1\in(-1,1).$ Clearly from~\eqref{011}, we get
\begin{equation*}
  24|\delta_4|\leq 12(1-\zeta_1^2)\psi(A,B,C,M),
\end{equation*}
where $\psi(A.B,C,M)$ is given by~\eqref{psi} with
\begin{equation*}
  A=\dfrac{-28\zeta_1+52\zeta_1^3}{12(1-\zeta_1^2)},\;B=-\dfrac{14}{3}\zeta_1,\; C=-\zeta_1\;\text{and}\;M=1.
\end{equation*}
Let us set $r_1= -\sqrt{7/13},$ $r_2=\sqrt{7/13},$ $r_3\approx -0.907485$ and $r_4\approx-0.767772.$ Then we have
\begin{equation*}
  AC\geq 0,\;\zeta_1\in[r_1,r_2]
  \end{equation*}
  and
  \begin{equation*}
   AC<0,\;\zeta_1\in(-1,r_1)\cup(r_2,1).
\end{equation*}
Since we are using the same method as used in Theorem~\ref{theo1}(iii), we summarize the above three cases in the following table:
\begin{center}
\renewcommand{\arraystretch}{2}
\begin{tabular}{ |c|c|c|c|c| }
\hline
Cases & Subcases & $\max\psi(A,B,C,M)$  & $12(1-\zeta_1^2)\max\psi(A,B,C,M)$ & Bound \\
\hline
\multirow{4}{*}{$[r_1,r_2]$} & $[r_1,-0.3]$ & $|A|+|B|+|C|$ & $6\zeta_1(5\zeta_1^2-4)$ & $16/(3\sqrt{15})$  \\ \cline{2-5}
& $(-0.3,0)$ & $|M|+|A|+\dfrac{B^2}{4(|M|-|C|)}$ & $-\dfrac{10 \zeta_1^3}{3}+\dfrac{40 \zeta_1^2}{3}-7 \zeta_1+3$ & 1.065\\
\cline{2-5}
& $(0,0.3)$ & $|M|+|A|+\dfrac{B^2}{4(|M|-|C|)}$ & $\dfrac{10 \zeta_1^3}{3}+\dfrac{40 \zeta_1^2}{3}+7 \zeta_1+3$ & 1.065\\
\cline{2-5}
& $[0.3,r_2]$ & $|A|+|B|+|C|$ & $-6\zeta_1(5\zeta_1^2-4)$ & $16/(3\sqrt{15})$  \\ \hline
\multirow{3}{*}{$(-1,r_1)$} & $(-1,r_3]$ & $|A|+|B|-|C|$ & $-2\zeta_1(2+\zeta_1^2)$ & 1  \\ \cline{2-5}
& $(r_3,r_4)$ & $(|C|+|A|)\sqrt{1-\dfrac{B^2}{4AC}}$ & $-\dfrac{2 \zeta_1 \left(5 \zeta_1^2-2\right)}{3 \left(1-\zeta_1^2\right)} \sqrt{\dfrac{28-10 \zeta_1^2}{39 \zeta_1^2-21}}$ & 0.854103  \\ \cline{2-5}
& $[r_4,r_1)$ & $-|A|+|B|+|C|$ & $6\zeta_1(-4+5\zeta_1^2)$ & 0.9595  \\ \hline
\multirow{1}{*}{$(r_2,1)$} & $(r_2,-r_4)$ & $-|A|+|B|+|C|$ & $-6\zeta_1(-4+5\zeta_1^2)$ & 0.9595  \\ \cline{2-5}
& $(-r_4,-r_3)$ & $(|C|+|A|)\sqrt{1-\dfrac{B^2}{4AC}}$ & $\dfrac{2 \zeta_1 \left(5 \zeta_1^2-2\right)}{3 \left(1-\zeta_1^2\right)} \sqrt{\dfrac{28-10 \zeta_1^2}{39 \zeta_1^2-21}}$ & 0.854103  \\ \cline{2-5}
& $(-r_3,1]$ & $|A|+|B|-|C|$ & $2\zeta_1(2+\zeta_1^2))$ & 1  \\ \hline
\end{tabular}
\end{center}
Summing up all the cases mentioned in the above table and the bounds given by~\eqref{012}, we conclude that $|\delta_4|\leq 16/3\sqrt{15}.$ The function $\tilde{k}_2\in\mathcal{F}_2,$ obtained by solving $z\tilde{k}'_2(z)=(z/(1-z^2))H_{t_0,-1}(z),$ where $H_{t,\beta}(z)$ is given by~\eqref{H}, acts as an extremal function for the inequality $|\delta_4|\leq 16/(3\sqrt{15}).$ Hence the inequality qualifies to be sharp.\\

\noindent (iv) Upon substitution of $b_2=0$, $b_3=1$, $b_4=0$ and $b_5=1$ in~\eqref{delta5}, we get
\begin{equation*}
  \delta_5= \dfrac{2}{5}-c_1^2+\dfrac{7}{8}c_1^4+\dfrac{7}{15}c_2-\dfrac{7}{4}c_1^2c_2+\dfrac{1}{3}c_2^2+\dfrac{3}{4}c_1c_3-\dfrac{1}{5}c_4.
\end{equation*}
In view of Lemma~\ref{ownlem3} with $\zeta_1\in[-1,1]$ and $\zeta_i\in\overline{\mathbb{D}}\;(i=2,3,4)$, we have
\begin{eqnarray*}\nonumber
  \delta_5&=&\dfrac{2}{15}-\dfrac{46}{15}\zeta_1^2+\dfrac{59}{15}\zeta_1^4+\dfrac{14}{15}(1-\zeta_1^2)\zeta_2-\dfrac{98}{15}\zeta_1^2\zeta_2(1-\zeta_1^2)-\dfrac{11}{5}\zeta_1^2\zeta_2^2(1-\zeta_1^2)-\dfrac{2}{5}\zeta_1^2\zeta_2^3(1-\zeta_1^2)\\ \nonumber
  & &+\dfrac{14}{15}\zeta_2^2(1-\zeta_1^2)^2+\dfrac{11}{5}\zeta_1\zeta_3(1-\zeta_1^2)(1-|\zeta_2|^2)+\dfrac{4}{5}\zeta_1\zeta_2\zeta_3(1-\zeta_1^2)(1-|\zeta_2|^2)\\
          & &+\dfrac{2}{5}(1-\zeta_1^2)(1-|\zeta_2|^2)\bar{\zeta_2}\zeta_3^2-\dfrac{2}{5}(1-\zeta_1^2)(1-|\zeta_2|^2)(1-|\zeta_3|^2)\zeta_4.
\end{eqnarray*}
Now applying triangle inequality, we get
\begin{equation}\label{17b}
  |\delta_5|\leq |\gamma_1(\zeta_1,\zeta_2)|+|\gamma_2(\zeta_1,\zeta_2)||\zeta_3|+|\gamma_3(\zeta_1,\zeta_2)||\zeta_3|^2+|\gamma_4(\zeta_1,\zeta_2,\zeta_3)||\zeta_4|,
\end{equation}
where
\begin{eqnarray*}
% \nonumber to remove numbering (before each equation)
 &\gamma_1(\zeta_1,\zeta_2)=&\dfrac{2}{15}-\dfrac{46}{15}\zeta_1^2+\dfrac{59}{15}\zeta_1^4+\dfrac{14}{15}(1-\zeta_1^2)\zeta_2-\dfrac{98}{15}\zeta_1^2\zeta_2(1-\zeta_1^2)-\dfrac{11}{5}\zeta_1^2\zeta_2^2(1-\zeta_1^2)\\
& & -\dfrac{2}{5}\zeta_1^2\zeta_2^3(1-\zeta_1^2)+\dfrac{14}{15}\zeta_2^2(1-\zeta_1^2)^2\\
  &\gamma_2(\zeta_1,\zeta_2) =& \dfrac{11}{5}\zeta_1(1-\zeta_1^2)(1-|\zeta_2|^2)+\dfrac{4}{5}\zeta_1\zeta_2(1-\zeta_1^2)(1-|\zeta_2|^2) \\
 &\gamma_3(\zeta_1,\zeta_2) =& \dfrac{2}{5}(1-\zeta_1^2)(1-|\zeta_2|^2)\bar{\zeta_2} \\
  &\gamma_4(\zeta_1,\zeta_2,\zeta_3) =& -\dfrac{2}{5}(1-\zeta_1^2)(1-|\zeta_2|^2)(1-|\zeta_3|^2).
\end{eqnarray*}
Taking $\zeta_2=re^{i\theta}$ with $d=\cos{\theta}$ such that $0\leq r\leq1$ and $0\leq\theta\leq2\pi,$ we may write
\begin{equation}\label{18b}
Q(\zeta_1,r,d):=|\gamma_1(\zeta_1,\zeta_2)|^2.
\end{equation}
 Then
\begingroup
\allowdisplaybreaks
\begin{eqnarray*}
  Q(\zeta_1,r,d)&=&\dfrac{944}{75} d^3 r^3 \zeta_1^8-\dfrac{112}{5} d^3 r^3 \zeta_1^6+\dfrac{256}{25} d^3 r^3 \zeta_1^4-\dfrac{32}{75} d^3 r^3 \zeta_1^2+\dfrac{784}{75} d^2 r^4 \zeta_1^8-\dfrac{112}{5} d^2 r^4 \zeta_1^6\\
  & &+\dfrac{336}{25} d^2 r^4 \zeta_1^4-\dfrac{112}{75} d^2 r^4 \zeta_1^2+\dfrac{11092}{225} d^2 r^2 \zeta_1^8-\dfrac{23044}{225} d^2 r^2 \zeta_1^6+\dfrac{1656}{25} d^2 r^2 \zeta_1^4\\
  & &-\dfrac{3064}{225} d^2 r^2 \zeta_1^2+\dfrac{112 d^2 r^2}{225}+\dfrac{188}{75} d r^5 \zeta_1^8-\dfrac {144}{25} d r^5 \zeta_1^6+4 d r^5 \zeta_1^4-\dfrac{56}{75} d r^5 \zeta_1^2\\
  & &+\dfrac{7088}{225} d r^3 \zeta_1^8-\dfrac{18704}{225} d r^3 \zeta_1^6+\dfrac{5332}{75} d r^3 \zeta_1^4-\dfrac{4772}{225} d r^3 \zeta_1^2+\dfrac{392 d r^3}{225}+\dfrac{11564}{225} d r \zeta_1^8\\
  & &-\dfrac{22232}{225} d r \zeta_1^6+\dfrac{1372}{25} d r \zeta_1^4-\dfrac{1736}{225} d r \zeta_1^2+\dfrac{56 d r}{225}+\dfrac{4 r^6 \zeta_1^8}{25}-\dfrac{8 r^6 \zeta_1^6}{25}+\dfrac{4 r^6 \zeta_1^4}{25}\\
  & &+\dfrac{1033 r^4 \zeta_1^8}{225}-\dfrac{3214 r^4 \zeta_1^6}{225}+\dfrac{47 r^4 \zeta_1^4}{3}-\dfrac{308 r^4 \zeta_1^2}{45}+\dfrac{196 r^4}{225}+\dfrac{4058 r^2 \zeta_1^8}{225}-\dfrac{2086 r^2 \zeta_1^6}{45}\\
  & &+\dfrac{2612 r^2 \zeta_1^4}{75}-\dfrac{1604 r^2 \zeta_1^2}{225}+\dfrac{28 r^2}{45}+\dfrac{3481 \zeta_1^8}{225}-\dfrac{5428 \zeta_1^6}{225}+\dfrac{784 \zeta_1^4}{75}-\dfrac{184 \zeta_1^2}{225}+\dfrac{4}{225}.
\end{eqnarray*}
\endgroup
It is sufficient to find the points in the rectangular cube $K:=\{(\zeta_1,r,d):-1\leq \zeta_1\leq 1, 0\leq r\leq 1\;\text{and}\;-1\leq d\leq 1\}$, where the maximum value of $Q(\zeta_1,r,d)$ is attained. In order to find maximum in the interior of $K$, we try to find the points where
\begin{equation*}
  \dfrac{\partial Q(\zeta_1,r,d)}{\partial c}=\dfrac{\partial Q(\zeta_1,r,d)}{\partial r}=\dfrac{\partial Q(\zeta_1,r,d)}{\partial d}=0.
\end{equation*}
After few steps of calculation, we find out that the above set of equations has no solution inside $K.$ Now we are left with six faces namely, $\zeta_1=-1,\;\zeta_1=1,\;r=0,\;r=1,\;d=-1,\;d=1$ and twelve edges of $K$ given by $\zeta_1=-1,\;r=0;\zeta_1=-1,\;r=1;\zeta_1=1,\;r=0;\zeta_1=1,\;r=1;\zeta_1=-1,\;d=-1;\zeta_1=-1,\;d=1;\zeta_1=1,\;d=-1;\zeta_1=1,\;d=1;r=0,\;d=-1;r=0,\;d=1;r=1,\;d=-1;r=1,\;d=1.$ In all these cases, we use elementary techniques to find maximum values and conclude that $Q(\zeta_1,r,d)$ attains its maximum at the point $(\sqrt{15/7},1,0)$ and is equal to $791/392.$ Next, let us write $|\zeta_2|=r$ and $|\zeta_3|=q$ in
\begin{equation}\label{19b}
 G(\zeta_1,r,q):= |\gamma_2(\zeta_1,\zeta_2)||\zeta_3|+|\gamma_3(\zeta_1,\zeta_2)||\zeta_3|^2+|\gamma_4(\zeta_1,\zeta_2,\zeta_3)|.
\end{equation}
Again using the same method, we find the maximum of  $G(c,r,q)$ on the cuboid $\{(\zeta_1,r,q):-1\leq \zeta_1\leq 1,0\leq r\leq 1,0\leq q\leq 1\}$ and observe that the maximum value is attained at $(\zeta_1',r',1)$, where $\zeta_1'\approx 1.12539$ is the smallest root of $5552+1936\zeta_1-6200\zeta_1^2-2776\zeta-1^3+1479\zeta_1^4+945\zeta_1^5=0$ and $r'=11(4-3(\zeta_1')^2)/(4(3(\zeta_1')^2+2(\zeta_1')-4))\approx0.268895$. Hence $G(\zeta_1',r',1)\approx 0.929727.$ From~\eqref{17b},~\eqref{18b} and~\eqref{19b}, we obtain
\begin{equation*}
|\delta_5|\leq  \max_{\zeta_1\in[-1,1],r\in[0,1],s\in[-1,1]}\sqrt{Q(\zeta_1,r,d)}+\max_{\zeta_1\in[-1,1],r\in[0,1],q\in[0,1]}G(\zeta_1,r,q).
\end{equation*}
Hence $|\delta_5|\leq 791/392+0.929727\approx 2.947584$. However, this bound is not sharp but we may conclude that the sharp bound lies in the range $[791/392, 791/392 + 0.929727]$ as there exists a function $\hat{k}_2(z)\in\mathcal{F}_2$ for which the fifth inverse coefficient is equal to $791/392.$ The function $\hat{k}_2(z)$ can be obtained by solving $z\hat{k}_2(z)=(z/(1-z^2))H_{t_1,\beta_1}(z)$, where $t_1=(1/56)\left(14-\sqrt{105}\right)$ and $\beta=-1.$
\end{proof}
\begin{theorem}
Let $f(z)=z+a_2z^2+a_3z^3+\ldots\in\mathcal{F}_3$. Then $(i)\;|\delta_2|\leq 3/2,$ $(ii)\;|\delta_3|\leq 19/6.$ Further, if $a_2\in\mathbb{R},$ then $(iii)\;|\delta_4|\leq 61/8,$ $(iv)\;|\delta_5|\leq 2371/120.$  These bounds are sharp.
\end{theorem}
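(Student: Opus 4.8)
The plan is to exploit that $f\in\mathcal{F}_3$ is close-to-convex with respect to $g(z)=z/(1-z+z^2)$, and to follow verbatim the template of Theorem~\ref{theo1}. First I would expand $g$: since $1/(1-z+z^2)$ has the $6$-periodic coefficient pattern $1,1,0,-1,-1,0,\dots$, one reads off $b_2=1,\ b_3=0,\ b_4=-1,\ b_5=-1$. With these, parts (i) and (ii) are immediate. For (i), \eqref{modelta2} gives $2|\delta_2|\le 1+|c_1|\le 3$, sharp when $|c_1|=2$, i.e.\ for $P_{1,\vartheta}$. For (ii) I would insert $b_2=1,\ b_3=0$ into \eqref{delta3pq} to produce a function $\phi_3(p,q)$ on $\{0\le p\le 2,\,-1\le q\le 1\}$, show as in Theorem~\ref{theo1}(ii) that there is no interior critical point, and test the four edges; the maximum $19/2$ is attained at $(p,q)=(2,1)$, giving $|\delta_3|\le 19/6$. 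Both bounds are realized by the solution of $zf'(z)=z(1-z+z^2)^{-1}P_{1,\vartheta}(z)$ (with $\vartheta=0$ for (ii)).

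For part (iii), substituting $b_2=1,\ b_3=0,\ b_4=-1$ into \eqref{delta4} yields $24\delta_4=-9-25c_1-25c_1^2-15c_1^3+14c_2+20c_1c_2-6c_3$. Since $a_2\in\mathbb{R}$ forces $c_1\in\mathbb{R}$ and hence $\zeta_1\in\mathbb{R}$, Lemma~\ref{lemmacho} rewrites this in exactly the shape of Theorem~\ref{theo1}(iii), namely $24|\delta_4|\le 12(1-\zeta_1^2)\psi(A,B,C,M)$ with $B=-\tfrac{7}{3}(1+2\zeta_1),\ C=-\zeta_1,\ M=1$ \emph{unchanged} and only $A=(9+50\zeta_1+72\zeta_1^2+52\zeta_1^3)/(12(1-\zeta_1^2))$ differing. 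The whole case analysis driven by Lemma~\ref{maxlemma} (signs of $AC$, the subcase inequalities on the relevant subintervals of $[-1,1]$) then runs in parallel. Checking the endpoints gives $\delta_4=7/8$ at $\zeta_1=-1$ and $\delta_4=-61/8$ at $\zeta_1=1$, and the case-by-case maximization confirms the overall maximum $61/8$ at $\zeta_1=1$; the extremal function solves $zk'(z)=z(1-z+z^2)^{-1}P_{1,0}(z)$.

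Part (iv) is the crux. Substituting $b_2=1,\ b_3=0,\ b_4=-1,\ b_5=-1$ into \eqref{delta5} gives
\[
\delta_5=\tfrac{13}{40}+\tfrac{6}{5}c_1+\tfrac{25}{12}c_1^2+\tfrac{7}{4}c_1^3+\tfrac{7}{8}c_1^4-c_2-\tfrac{25}{12}c_1c_2-\tfrac{7}{4}c_1^2c_2+\tfrac{1}{3}c_2^2+\tfrac{11}{20}c_3+\tfrac{3}{4}c_1c_3-\tfrac{1}{5}c_4.
\]
The key step is to reproduce the decomposition $\delta_5=\tfrac{1}{5}A+\tfrac{11}{20}B+\varrho+\varsigma$, where $A,B$ are the two sharp functionals of Lemma~\ref{lemma5} (so $|A|,|B|\le 2$), $\varrho(c_1,c_2,c_3)=\tfrac{13}{40}+\tfrac{6}{5}c_1+\tfrac{2}{15}c_2^2+\tfrac{7}{20}c_1c_3$, and
\[
\varsigma(c_1,c_2)=\Big(\tfrac{25}{12}c_1^2-c_2\Big)+\tfrac{59}{60}c_1\Big(\tfrac{72}{59}c_1^2-c_2\Big)+\tfrac{23}{20}c_1^2\Big(\tfrac{27}{46}c_1^2-c_2\Big).
\]
Only the constant and the $c_1$-, $c_1^2$-, $c_2$-coefficients differ from Theorem~\ref{theo1}(iv); the grouping parameters $72/59$ and $27/46$ persist. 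A triangle-inequality estimate gives $|\varrho|\le 559/120$. For $\varsigma$ I would pass to $\zeta_1,\zeta_2$ via Lemma~\ref{lemmacho}, obtaining $\varsigma_1=\tfrac{1-\zeta_1^2}{15}\psi(A,B,C,M)$ with $A=(95\zeta_1^2+85\zeta_1^3+24\zeta_1^4)/(1-\zeta_1^2),\ B=-(30+59\zeta_1+138\zeta_1^2),\ C=M=0$. Both numerator quadratics are sign-definite on $[-1,1]$, so Lemma~\ref{maxlemma} (first case) yields $|\varsigma_1|\le\tfrac{1}{15}(30+59\zeta_1+203\zeta_1^2+26\zeta_1^3-114\zeta_1^4)$, whose maximum on $[-1,1]$ is $204/15$ at $\zeta_1=1$. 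Adding the four contributions gives exactly $\tfrac{2}{5}+\tfrac{11}{10}+\tfrac{559}{120}+\tfrac{204}{15}=\tfrac{2371}{120}$. Sharpness follows since $p(z)=(1+z)/(1-z)$ (all $c_n=2$, i.e.\ $\zeta_1=1$) makes every estimate an equality simultaneously, and a direct evaluation of $\delta_5$ at $c_n=2$ returns $2371/120$; the extremal function solves $z\hat{k}'(z)=z(1-z+z^2)^{-1}P_{1,0}(z)$.

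The main obstacle I anticipate is part (iv): not the arithmetic, but choosing the split of the non-extremal remainder into $\varrho$ and $\varsigma$ so that the four separate sharp estimates ($|A|,|B|\le 2$, the triangle inequality on $\varrho$, and the $\psi$-maximization of $\varsigma_1$) all become equalities at the \emph{single} point $\zeta_1=1$. A slightly different grouping makes the individual estimates overshoot and destroys sharpness, so the real content is verifying that all four pieces are extremized together by $p(z)=(1+z)/(1-z)$, which is precisely what forces $2371/120$ to be attained. A secondary, purely bookkeeping difficulty is locating, in part (iii), the breakpoints in $[-1,1]$ where the branches of Lemma~\ref{maxlemma} switch.
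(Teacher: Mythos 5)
Your proposal reproduces the paper's argument essentially verbatim: the same values $b_2=1,\,b_3=0,\,b_4=-1,\,b_5=-1$, the same edge-maximization of $\phi_3$ for (ii), the same $\psi(A,B,C,M)$ case analysis via Lemma~\ref{maxlemma} for (iii), and the identical decomposition $\delta_5=\tfrac{1}{5}A+\tfrac{11}{20}B+\varrho+\varsigma$ with the same bounds $|\varrho|\le 559/120$ and $|\varsigma_1|\le 204/15=68/5$ for (iv), so the approach and all the numerics match. (Your endpoint value $\delta_4=-61/8$ at $\zeta_1=1$ is in fact the correct sign; the paper's $+61/8$ is a typo that does not affect $|\delta_4|$.)
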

\begin{proof}
 (i)\;Let $f\in\mathcal{F}_3$. Since $f$ is close-to-convex function with respect to the starlike function $z/(1-z+z^2),$ from~\eqref{compare0} we have
  \begin{equation}\label{2c}
  zf'(z)=\dfrac{z}{1-z+z^2}p(z).
  \end{equation}
For $g(z)=z/(1-z+z^2)$, we have $b_2=1,\;b_3=0,\;b_4=-1$ and $b_5=-1$. Thus, in view of Lemma~\ref{lemma3},~\eqref{modelta2} reduces to
\begin{equation*}
  2|\delta_2|\leq 1+|c_1|\leq 3.
\end{equation*}
We know that this inequality is sharp whenever $|c_1|=2,$ which is true for the function $P_{1,\vartheta}(z)\;(0\leq\vartheta<2\pi),$ defined in Lemma~\ref{lemma1}. The inequality $|\delta_2|\leq3/2$ is sharp since there exists an extremal function $\hat{f}_3\in\mathcal{F}_3,$ which is a solution of $z\hat{f'}_3(z)=z(1-z)^{-1}P_{1,\vartheta}(z)$.\\

\noindent(ii)\;By taking $b_2=1$ and $b_3=0$ in~\eqref{delta3pq}, we get
\begin{equation*}
 3|\delta_3|\leq 2-\dfrac{p^2}{2}+\left|-\dfrac{1}{2}-\left(1+pq+ip\sqrt{1-q^2}\right)^2\right|,
\end{equation*}
which can be written as
\begin{equation*}
  3|\delta_3|\leq 2-\dfrac{p^2}{2}+\sqrt{(p^2+\dfrac{3}{2}+2pq)^2-2p^2(1-q^2)}=:\phi_3(p,q).
\end{equation*}
In the domain $D=\{(p,q):0\leq p\leq 2, -1\leq q\leq 1\}$, we need to find the points where $\phi_3(p,q)$ attains its maximum. A simple computation shows that the only solution of
\begin{equation*}
  \dfrac{\partial \phi_3(p,q)}{\partial p}=0\quad\text{and}\quad\dfrac{\partial \phi_3(p,q)}{\partial q}=0
\end{equation*}
in $\mathbb{R}\times\mathbb{R}$ is (0,0). Thus, the maximum value of $\phi_3(p,q)$ is not attained inside $D$. Now, we consider the edges of $D$ to find the maximum of $\phi_3(p,q)$. On the line segment $p=0$, $\phi_3(0,q)=7/2.$ On the line segment $p=2$, $\phi_3(2,q)=\sqrt{(11/2+4q)^2-8(1-q^2)}$ which is a decreasing function of $q,$ whenever $-1\leq q< -11/12$ and thus maximum value is attained at $q=-1$. Similarly, $\phi_3(2,q)$ is an increasing function of $q,$ whenever $-11/12<q\leq 1$ and thus maximum value is attained at $q=1$. Now, we have $\phi_3(2,-1)=3/2$ and $\phi_3(2,1)=19/2$. Thus, $\max_{q\in[-1,1]}\phi_3(2,q)=\phi_3(2,1)=19/2.$ On the line segment $q=-1$, $\phi_3(p,-1)=7/2+p^2/2-2p$ which is a decreasing function of $p$ whenever $0\leq p\leq2.$ Thus, $\max_{p\in[0,2]}\phi_3(p,-1)=\phi_3(0,-1)=7/2.$ On the line segment $q=1$,  $\phi_3(p,1)=7/2+p^2/2+2p$ which is an increasing function of $p$ whenever $0\leq p\leq2.$ Thus, $\max_{p\in[0,2]}\phi_3(p,1)=\phi_3(2,1)=19/2.$ It is easy to note that maximum value of $\phi_3(p,q)$ is attained at $(2,1)$ and is equal to $19/2$. Thus $|\delta_3|\leq 19/6.$ The extremal function $\tilde{f}_3\in\mathcal{F}_3$ for which the upper bound of $|\delta_3|$ is sharp, can be obtained by solving~\eqref{2c} with $P_{1,0}(z)$ in place of $p(z)$.\\

\noindent(iii) Upon substituting $b_2=1,\;b_3=0,\;b_4=-1$ in~\eqref{delta4}, we get
\begin{equation*}
 \delta_4=\dfrac{1}{24} \left(-9-25c_1-25c_1^2-15c_1^3+14c_2+20c_1c_2-6c_3\right).
\end{equation*}
We have $a_2\in\mathbb{R},$ which together with~\eqref{aibici1} yields that $c_1\in\mathbb{R}.$ Then by using Lemma~\ref{lemmacho}, we have $\zeta_1\in\mathbb{R}$ and
\begin{eqnarray}\nonumber
24\delta_4 &=& -9-50\zeta_1-72\zeta_1^2-52\zeta_1^3+28(1-\zeta_1^2)\zeta_2+56(1-\zeta_1^2)\zeta_1\zeta_2-12(1-\zeta_1^2)(1-\zeta_2^2)\zeta_3\\ \label{02}
& &+12(1-\zeta_1^2)\zeta_1\zeta_2^2,
\end{eqnarray}
for $\zeta_1\in[-1,1]$ and $\zeta_2,\zeta_3\in\overline{\mathbb{D}}.$ Note that for $\zeta_1=-1$ and $\zeta_1=1$ respectively,
\begin{equation}\label{04}
\delta_4=\dfrac{21}{24}\quad \text{and}\quad\delta_4=\dfrac{61}{8}.
\end{equation}
So now we consider $\zeta_1\in(-1,1).$ Clearly from~\eqref{02}, we get
\begin{equation*}
  24|\delta_4|\leq 12(1-\zeta_1^2)\psi(A,B,C,M),
\end{equation*}
where $\psi(A,B,C,M)$ is given by~\eqref{psi} with
\begin{equation*}
  A=\dfrac{9+50\zeta_1+72\zeta_1^2+52\zeta_1^3}{12(1-\zeta_1^2)},\;B=-\dfrac{7}{3}(1+2\zeta_1),\; C=-\zeta_1\;\text{and}\;M=1.
\end{equation*}
Let us set $r_1\approx -0.257982$ and $r_2\approx -0.29465.$ Then we have
\begin{equation*}
  AC\geq 0,\;\zeta_1\in[r_1,0]
  \end{equation*}
  and
  \begin{equation*}
   AC<0,\;\zeta_1\in(-1,r_1)\cup(0,1).
\end{equation*}
Since we are using the same method as used in Theorem~\ref{theo1}(iii), we summarize the above three cases in the following table:
\begin{center}
\renewcommand{\arraystretch}{2}
\begin{tabular}{ |c|c|c|c|c| }
\hline
Cases & Subcases & $\max\psi(A,B,C,M)$  & $12(1-\zeta_1^2)\max\psi(A,B,C,M)$ & Bound \\
\hline
\multirow{2}{*}{$[r_1,0]$} & $[r_1,-0.125)$ & $|M|+|A|+\dfrac{B^2}{4(|M|-|C|)}$ & $\dfrac{112}{3}+99\zeta_1+60\zeta_1^2-\dfrac{40}{3}\zeta_1^3$ & 1.08008  \\ \cline{2-5}
& $[-0.125,0]$ & $|A|+|B|+|C|$ & $37+94\zeta_1+44\zeta_1^2+8\zeta_1^3$ & 37/24 \\
\hline
\multirow{2}{*}{$(-1,r_1)$} & $(-1,r_2)$ & $1+|A|+\dfrac{B^2}{4(|M|+|C|)}$ & $\dfrac{1}{3}(58+95\zeta_1+140\zeta_1^2+40\zeta_1^3)$ & 7/8  \\ \cline{2-5}
& $[r_2,r_1)$ & $1-|A|+\dfrac{B^2}{4(|M|-|C|)}$ & $\dfrac{112}{3}+99\zeta_1+60\zeta_1^2-\dfrac{40}{3}\zeta_1^3$ & 0.585036  \\ \hline
\multirow{1}{*}{$(0,1)$} & $-$ & $|A|+|B|-|C|$ & $37+94\zeta_1+44\zeta_1^2+8\zeta_1^3$ & 61/8  \\ \hline
\end{tabular}
\end{center}
Summing up all the cases mentioned in the above table and the bounds given by~\eqref{04}, we conclude that $|\delta_4|\leq 61/8.$ The function $\tilde{k}_3\in\mathcal{F}_3$ obtained by solving $z\tilde{k}'_3(z)=z(1-z+z^2)^{-1}P_{1,0}(z)$ acts as an extremal function for the inequality $|\delta_4|\leq 61/8$ and hence the inequality becomes sharp.\\

\noindent(iv)\;Upon substituting $b_2=1,\;b_3=0,\;b_4=-1$ and $b_5=-1$ in~\eqref{delta5}, we get
\begin{equation*}
\begin{aligned}
 \delta_5=&\dfrac{13}{40}+\dfrac{6}{5}c_1+\dfrac{25}{12}c_1^2+\dfrac{7}{4}c_1^3+\dfrac{7}{8}c_1^4-c_2-\dfrac{25}{12}c_1c_2-\dfrac{7}{4}c_1^2c_2+\dfrac{1}{3}c_2^2+\dfrac{11}{20}c_3+\dfrac{3}{4}c_1c_3-\dfrac{1}{5}c_4,
\end{aligned}
\end{equation*}
which can be written as
\begin{equation}\label{17c}
\begin{aligned}
  \delta_5=\dfrac{1}{5}A+\dfrac{11}{20}B+\varrho(c_1,c_2,c_3)+\varsigma(c_1,c_2),
\end{aligned}
\end{equation}
where
\begin{eqnarray*}
A&=&c_1^4-3c_1^2c_2+c_2^2+2c_1c_3-c_4,\\
B&=&c_3-2c_1c_2+c_1^3,\\
\varrho(c_1,c_2,c_3)&=&\dfrac{13}{40}+\dfrac{6}{5}c_1+\dfrac{2}{15}c_2^2+\dfrac{7}{20}c_1c_3\qquad\text{and}\\
\varsigma(c_1,c_2)&=&\left(\dfrac{25}{12}c_1^2-c_2\right)+\dfrac{59}{60}c_1\left(\dfrac{72}{59}c_1^2-c_2\right)+\dfrac{23}{20}c_1^2\left(\dfrac{27}{46}c_1^2-c_2\right).
\end{eqnarray*}
Using Lemma~\ref{lemma5}, we have $|A|\leq 2$ and $|B|\leq 2.$ Evidently
\begin{equation*}
  |\varrho(c_1,c_2,c_3)|\leq \dfrac{13}{40}+\dfrac{6}{5}|c_1|+\dfrac{2}{15}|c_2|^2+\dfrac{7}{20}|c_1c_3|\leq \dfrac{559}{120}.
\end{equation*}
Now, let us consider $\varsigma(c_1,c_2)$ which can be again written in terms of $\zeta_1$ and $\zeta_2$ by using Lemma~\ref{lemmacho} as follows:
\begin{equation}\label{05}
  \varsigma_1(\zeta_1,\zeta_2)=\dfrac{1-\zeta_1^2}{15}\left(\dfrac{95\zeta_1^2+85\zeta_1^3+24\zeta_1^4}{1-\zeta_1^2}-30\zeta_2-59\zeta_1\zeta_2-138\zeta_1^2\zeta_2\right).
\end{equation}
Note that for $\zeta_1=-1$ and $\zeta_1=1$ respectively
\begin{equation*}
  \delta_5=34/15\quad\text{and}\quad\delta_5=68/5.
\end{equation*}
We may write~\eqref{05} as
\begin{equation*}
    \varsigma_1(\zeta_1,\zeta_2)=\dfrac{1-\zeta_1^2}{15}\psi(A,B,C,M),
\end{equation*}
where $\psi(A,B,C,M)$ is given by~\eqref{psi} with
\begin{equation*}
A=\dfrac{95\zeta_1^2+85\zeta_1^3+24\zeta_1^4}{1-\zeta_1^2},\;B=-30-59\zeta_1-138\zeta_1^2,\;C=0\;\text{and}\;M=0.
\end{equation*}
Since $C=0,$ we consider the first case of the Lemma~\ref{maxlemma} and find out that $|B|\geq 2(|M|-|C|)$ on $(-1,1).$ Thus
\begin{eqnarray*}
 |\varsigma_1(\zeta_1,\zeta_2)|&\leq & \dfrac{1-\zeta_1^2}{15}\max\psi(A,B,C,M)\\
 &=&\dfrac{1-\zeta_1^2}{15}(|A|+|B|+|C|)\\
 &=& -\dfrac{38 \zeta_1^4}{5}+\dfrac{26 \zeta_1^3}{15}+\dfrac{203 \zeta_1^2}{5}+\dfrac{59 \zeta_1}{15}+2=:\vartheta(\zeta_1),
\end{eqnarray*}
where
\begin{equation*}
  \vartheta(x):=-\dfrac{38 x^4}{5}+\dfrac{26 x^3}{15}+\dfrac{203 x^2}{15}+\dfrac{59 x}{15}+2,\quad\quad x\in(-1,1).
\end{equation*}
Using elementary calculus, we find out that $\vartheta(\zeta_1)$ attains its maximum value at $\zeta_1=1$ and thus
$|\varsigma_1(\zeta_1,\zeta_2)|\leq \vartheta(1)=68/5.$
Applying triangle inequality on~\eqref{17c}, we have
\begin{eqnarray*}
% \nonumber to remove numbering (before each equation)
  |\delta_5| &\leq & \dfrac{1}{5}|A|+\dfrac{11}{20}|B|+|\varrho(c_1,c_2,c_3)|+|\varsigma(c_1,c_2)| \\
   &\leq & \dfrac{1}{5}(2)+\dfrac{11}{20}(2)+\dfrac{559}{120}+\dfrac{68}{5} \\
   &=& \dfrac{2371}{120}.
\end{eqnarray*}
 The inequality $|\delta_5|\leq 2371/120$ is sharp since there exists an extremal function $\hat{k}_3\in\mathcal{F}_3,$ which is a solution of $z\hat{k}'_3(z)=z(1-z+z^2)^{-1}P_{1,0}(z)$.
\end{proof}
\begin{theorem}
Let $f(z)=z+a_2z^2+a_3z^3+\ldots\in\mathcal{F}_4$. Then $(i)\;|\delta_2|\leq 2,$ $(ii)\;|\delta_3|\leq 5.$ Further, if $a_2\in\mathbb{R},$ then $(iii)\;|\delta_4|\leq 14,$ $(iv)\;|\delta_5|\leq 42.$  These bounds are sharp.
\end{theorem}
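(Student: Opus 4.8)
The plan is to exploit that every $f\in\mathcal{F}_4$ is close-to-convex with respect to the Koebe function $g(z)=z/(1-z)^2$, whose Taylor coefficients are $b_n=n$; in particular $b_2=2,\,b_3=3,\,b_4=4,\,b_5=5$. Feeding these values into \eqref{delta2}--\eqref{delta5} turns each assertion into an extremal problem over the Carath\'eodory coefficients $c_1,c_2,c_3,c_4$ of the associated $p\in\mathcal{P}$ determined by \eqref{compare0}. For (i) I would put $b_2=2$ into \eqref{modelta2} and apply Lemma~\ref{lemma3} to get $2|\delta_2|\le 2+|c_1|\le 4$, with equality forced by $|c_1|=2$. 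For (ii), substituting $b_2=2,\,b_3=3$ into \eqref{delta3pq} gives $3|\delta_3|\le 2-\tfrac{p^2}{2}+\big|1-(2+pq+ip\sqrt{1-q^2})^2\big|$, which I would recast as $\phi_4(p,q):=2-\tfrac{p^2}{2}+\sqrt{(\,\cdots\,)}$ and maximise over $D=\{0\le p\le2,\,-1\le q\le1\}$ exactly as in Theorem~\ref{theo1}(ii): rule out interior critical points, then inspect the four edges. I expect the maximum at $(p,q)=(2,1)$, where $3|\delta_3|=|1-16|=15$, whence $|\delta_3|\le 5$.

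For (iii) I would use $a_2\in\mathbb{R}$ with \eqref{aibici1} to force $c_1\in\mathbb{R}$, hence $\zeta_1\in\mathbb{R}$ in Lemma~\ref{lemmacho}. Putting $b_2=2,\,b_3=3,\,b_4=4$ into \eqref{delta4} and then the Cho parametrisation yields, for $\zeta_1\in(-1,1)$,
\[
24\delta_4=-(24+116\zeta_1+144\zeta_1^2+52\zeta_1^3)+56(1-\zeta_1^2)(1+\zeta_1)\zeta_2+12(1-\zeta_1^2)\zeta_1\zeta_2^2-12(1-\zeta_1^2)(1-|\zeta_2|^2)\zeta_3,
\]
so that $24|\delta_4|\le 12(1-\zeta_1^2)\,\psi(A,B,C,M)$ with $M=1$, $C=-\zeta_1$, $B=-\tfrac{14}{3}(1+\zeta_1)$ and $A=(24+116\zeta_1+144\zeta_1^2+52\zeta_1^3)/(12(1-\zeta_1^2))$. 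The heart of the argument is the case analysis driven by Lemma~\ref{maxlemma}: since $24+116\zeta_1+144\zeta_1^2+52\zeta_1^3=(1+\zeta_1)(24+92\zeta_1+52\zeta_1^2)$ changes sign at a root $\zeta_1\approx-0.318$, one has $AC\ge 0$ on a short interval abutting $0$ and $AC<0$ elsewhere, and on each resulting subinterval I would check which branch of $\Omega(A,B,C,M)$ applies (via the thresholds $|B|\gtrless2(|M|-|C|)$ and the $S(A,B,C)$ conditions) and bound the resulting polynomial in $\zeta_1$, tabulating the subcases as in the earlier theorems. The dominant branch is $|A|+|B|-|C|$ on $(0,1)$, where $12(1-\zeta_1^2)(|A|+|B|-|C|)=80+160\zeta_1+88\zeta_1^2+8\zeta_1^3$ increases to $336$ at $\zeta_1=1$, giving $|\delta_4|\le 14$; the endpoint values at $\zeta_1=\pm1$ are recorded separately and are dominated.

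For (iv), substituting $b_2=2,\,b_3=3,\,b_4=4,\,b_5=5$ into \eqref{delta5} and mimicking Theorem~\ref{theo1}(iv), I would split
\[
\delta_5=\tfrac{1}{5}A+\tfrac{11}{20}B+\varrho(c_1,c_2,c_3)+\varsigma(c_1,c_2),
\]
with $A,B$ as in Lemma~\ref{lemma5} (so $|A|,|B|\le 2$). The bookkeeping point that differs from $\mathcal{F}_1,\mathcal{F}_3$ is that, because $b_2=2$ doubles the $c_3$-term in \eqref{delta5}, a residual linear $c_3$ survives after removing $\tfrac{11}{20}B$ and must be placed in $\varrho(c_1,c_2,c_3)=1+\tfrac{37}{10}c_1+\tfrac{2}{15}c_2^2+\tfrac{11}{20}c_3+\tfrac{7}{20}c_1c_3$, which the triangle inequality and Lemma~\ref{lemma3} bound by $343/30$. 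The remaining piece I would write through Lemma~\ref{lemmacho} as $\varsigma_1=\tfrac{1-\zeta_1^2}{15}\psi(A,B,C,M)$ with $C=M=0$, $A=(242\zeta_1^2+170\zeta_1^3+24\zeta_1^4)/(1-\zeta_1^2)$ and $B=-(78+184\zeta_1+138\zeta_1^2)$; the first branch of Lemma~\ref{maxlemma} applies throughout $(-1,1)$, and maximising $\tfrac{1-\zeta_1^2}{15}(|A|+|B|)=\tfrac1{15}(78+184\zeta_1+302\zeta_1^2-14\zeta_1^3-114\zeta_1^4)$ gives $|\varsigma|\le 436/15$ at $\zeta_1=1$. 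Adding $\tfrac15\cdot2+\tfrac{11}{20}\cdot2+\tfrac{343}{30}+\tfrac{436}{15}=42$ closes the estimate.

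The main obstacle I anticipate is the exhaustive case analysis in (iii): correctly reading off the sign of $AC$ and the applicable $\Omega$-branch on each subinterval of $(-1,1)$, and confirming that every piecewise polynomial bound is dominated by the value $14$ attained as $\zeta_1\to1$. All four sharp bounds are realised by the Koebe-type extremal obtained from $z f'(z)=\tfrac{z}{(1-z)^2}P_{1,0}(z)$ (equivalently $c_n=2$ for all $n$), while for (i) and (ii) the one-parameter family $P_{1,\vartheta}$ already suffices; a direct substitution $c_n=2$ into \eqref{delta2}--\eqref{delta5} confirms $|\delta_2|=2,\ |\delta_3|=5,\ |\delta_4|=14,\ |\delta_5|=42$.
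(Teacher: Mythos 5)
Your proposal follows essentially the same route as the paper: the same substitution $b_n=n$, the same edge-analysis of $\phi_4(p,q)$ for (ii), the same reduction of (iii) to $12(1-\zeta_1^2)\psi(A,B,C,M)$ with the case analysis of Lemma~\ref{maxlemma} (your $B=-\tfrac{14}{3}(1+\zeta_1)$ is in fact what the paper's table uses, its displayed $B=-\tfrac{14}{3}(1+2\zeta_1)$ being a typo), and the same four-term splitting of $\delta_5$ in (iv), where your variant $\tfrac{11}{20}B$ with the extra $\tfrac{11}{20}c_3$ absorbed into $\varrho$ (bounded by $343/30$ instead of $31/3$) is only a relabelling that yields the same total $42$. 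The extremal function $zf'(z)=z(1-z)^{-2}P_{1,0}(z)$ is also the one the paper uses, so the argument is correct and essentially identical.
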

\begin{proof}
 (i)\;Let $f\in\mathcal{F}_4$. Since $f$ is close-to-convex function with respect to the starlike function $z/(1-z)^2,$ from~\eqref{compare0} we have
  \begin{equation}\label{2d}
  zf'(z)=\dfrac{z}{(1-z)^2}p(z).
  \end{equation}
For $g(z)=z/(1-z)^2$, we have $b_2=2,\;b_3=3,\;b_4=4$ and $b_5=5$. Thus, in view of Lemma~\ref{lemma3},~\eqref{modelta2} reduces to
\begin{equation*}
  2|\delta_2|\leq 2+|c_1|\leq 4.
\end{equation*}
We know that this inequality is sharp whenever $|c_1|=2,$ which is true for the function $P_{1,\vartheta}(z)\;(0\leq\vartheta<2\pi),$ defined in Lemma~\ref{lemma1}. The inequality $|\delta_2|\leq 2$ is sharp since there exists an extremal function $\hat{f}_4\in\mathcal{F}_4,$ which is a solution of $z\hat{f'}_4(z)=z(1-z)^{-1}P_{1,\vartheta}(z)$.\\

\noindent(ii)\;By taking $b_2=2$ and $b_3=3$ in~\eqref{delta3pq}, we get
\begin{equation*}
 3|\delta_3|\leq 2-\dfrac{p^2}{2}+\left|1-(2+pq+ip\sqrt{1-q^2})^2\right|,
\end{equation*}
which can be written as
\begin{equation*}
  3|\delta_3|\leq 2-\dfrac{p^2}{2}+\sqrt{(p^2+3+4pq)^2+4p^2(1-q^2)}=:\phi_4(p,q).
\end{equation*}
In the domain $D=\{(p,q):0\leq p\leq 2, -1\leq q\leq 1\}$, we need to find the points where $\phi_4(p,q)$ attains its maximum. A simple computation shows that the only solution of
\begin{equation*}
  \dfrac{\partial \phi_4(p,q)}{\partial p}=0\quad\text{and}\quad\dfrac{\partial \phi_4(p,q)}{\partial q}=0
\end{equation*}
in $\mathbb{R}\times\mathbb{R}$ is (0,0). Thus, the maximum value of $\phi_4(p,q)$ is not attained inside $D$. Now, we consider the edges of $D$ to find the maximum of $\phi_4(p,q)$. On the line segment $p=0$, $\phi_4(0,q)=5.$ On the line segment $p=2$, $\phi_4(2,q)=\sqrt{16 \left(1-q^2\right)+(8 q+7)^2}$ which is an increasing function of $q$ and thus maximum value is attained at $q=1$. Now, we have $\phi_4(2,1)=15.$ Thus, we have $\max_{q\in[-1,1]}\phi_4(2,q)=\phi_4(2,1)=15.$ On the line segment $q=-1$, $\phi_4(p,-1)=2-p^2/2+|3-4p+p^2|.$ Using elementary calculus, we obtain that $\phi_4(p,-1)$ attains its maximum value $5$ at $p=0.$ On the line segment $q=1$,  $\phi_4(p,1)=p^2/2+4 p+5$ which is an increasing function of $p$ whenever $0\leq p\leq2.$ Thus, $\max_{p\in[0,2]}\phi_4(p,1)=\phi_4(2,1)=15.$ It is easy to note that maximum value of $\phi_4(p,q)$ is attained at $(2,1)$ and is equal to $15$. Thus $|\delta_3|\leq 5.$ The extremal function $\tilde{f}_4\in\mathcal{F}_4$ for which the upper bound of $|\delta_3|$ is sharp, can be obtained by solving~\eqref{2d} with $P_{1,0}(z)$ in place of $p(z)$.\\

\noindent(iii) Upon substituting $b_2=2,\;b_3=3,\;b_4=4$ in~\eqref{delta4}, we get
\begin{equation*}
 \delta_4=\dfrac{1}{24} \left(-24-58c_1-50c_1^2-15c_1^3+28c_2+20c_1c_2-6c_3\right).
\end{equation*}
We have $a_2\in\mathbb{R},$ which together with~\eqref{aibici1} yields that $c_1\in\mathbb{R}.$ Then by using Lemma~\ref{lemmacho}, we have $\zeta_1\in\mathbb{R}$ and
\begin{eqnarray}\nonumber
24\delta_4 &=& -24-116\zeta_1-144\zeta_1^2-52\zeta_1^3+56(1-\zeta_1^2)\zeta_2+56(1-\zeta_1^2)\zeta_1\zeta_2-12(1-\zeta_1^2)(1-\zeta_2^2)\zeta_3\\ \label{07}
& &+12(1-\zeta_1^2)\zeta_1\zeta_2^2,
\end{eqnarray}
for $\zeta_1\in[-1,1]$ and $\zeta_2,\zeta_3\in\overline{\mathbb{D}}.$ Note that for $\zeta_1=-1$ and $\zeta_1=1$ respectively,
\begin{equation}\label{08}
\delta_4=0\quad \text{and}\quad\delta_4=14.
\end{equation}
So now we consider $\zeta_1\in(-1,1).$ Clearly from~\eqref{07}, we get
\begin{equation*}
  24|\delta_4|\leq 12(1-\zeta_1^2)\psi(A,B,C,M),
\end{equation*}
where $\psi$ is given by~\eqref{psi} with
\begin{equation*}
  A=\dfrac{24+116\zeta_1+144\zeta_1^2+52\zeta_1^3}{12(1-\zeta_1^2)},\;B=-\dfrac{14}{3}(1+2\zeta_1),\; C=-\zeta_1\;\text{and}\;M=1.
\end{equation*}
Let us set $r_1\approx -0.318042,$ $r_2\approx -0.67332$ and $r_3\approx -0.395298.$ Then we have
\begin{equation*}
  AC\geq 0,\;\zeta_1\in[r_1,0]
  \end{equation*}
  and
  \begin{equation*}
   AC<0,\;\zeta_1\in(-1,r_1)\cup(0,1).
\end{equation*}
Since we are using the same method as used in Theorem~\ref{theo1}(iii), we summarize the above three cases in the following table:
{\small
\begin{center}
\renewcommand{\arraystretch}{2}
\begin{tabular}{ |c|c|c|c|c| }
\hline
Cases & Subcases & $\max\psi(A,B,C,M)$  & $12(1-\zeta_1^2)\max\psi(A,B,C,M)$ & Bound \\
\hline
\multirow{1}{*}{$[r_1,0]$} & - & $|A|+|B|+|C|$ & $8(1+\zeta_1)(10+10\zeta_1+\zeta_1^2)$ & 10/3  \\
\hline
\multirow{3}{*}{$(-1,r_1)$} & $(-1,r_2)$ & $1+|A|+\dfrac{B^2}{4(|M|+|C|)}$ & $\dfrac{40}{3}(1+\zeta_1)(4+2\zeta_1+\zeta_1^2)$ & 0.563835  \\ \cline{2-5}
& $[r_2,r_3)$ & $(|C|+|A|)\sqrt{1-\dfrac{B^2}{4AC}}$ & $-\dfrac{2 \left(5 \zeta_1^2+13 \zeta_1+3\right)}{3 (1-\zeta_1)} \sqrt{\dfrac{-10 \zeta_1^3+20 \zeta_1^2+67 \zeta_1+49}{39 \zeta_1^3+69 \zeta_1^2+18 \zeta_1}}$ & 4.30598  \\ \cline{2-5}
& $[r_3,r_1)$ & $-|A|+|B|+|C|$ & $8(1+\zeta_1)(10+10\zeta_1+\zeta_1^2)$ & 1.57322  \\ \hline
\multirow{1}{*}{$(0,1)$} & $-$ & $|A|+|B|-|C|$ & $8(1+\zeta_1)(10+10\zeta_1+\zeta_1^2)$ & 14  \\ \hline
\end{tabular}
\end{center}
}
Summing up all the cases mentioned in the above table and the bounds given by~\eqref{08}, we conclude that $|\delta_4|\leq 61/8.$ The function $\tilde{k}_4\in\mathcal{F}_4$ obtained by solving $z\tilde{k}'_4(z)=z(1-z)^{-2}P_{1,0}(z)$ acts as an extremal function for the inequality $|\delta_4|\leq 14$ and hence the inequality becomes sharp.\\

\noindent(iv)\;Upon substituting $b_2=2,\;b_3=3,\;b_4=4$ and $b_5=5$ in~\eqref{delta5}, we get
\begin{equation*}
\begin{aligned}
 \delta_5=&1+\dfrac{37}{10}c_1+\dfrac{16}{3}c_1^2+\dfrac{7}{2}c_1^3+\dfrac{7}{8}c_1^4-\dfrac{13}{5}c_2-\dfrac{25}{6}c_1c_2-\dfrac{7}{4}c_1^2c_2+\dfrac{1}{3}c_2^2+\dfrac{11}{10}c_3+\dfrac{3}{4}c_1c_3-\dfrac{1}{5}c_4,
\end{aligned}
\end{equation*}
which can be written as
\begin{equation}\label{17d}
\begin{aligned}
  \delta_5=\dfrac{1}{5}A+\dfrac{11}{10}B+\varrho(c_1,c_2,c_3)+\varsigma(c_1,c_2),
\end{aligned}
\end{equation}
where
\begin{eqnarray*}
A&=&c_1^4-3c_1^2c_2+c_2^2+2c_1c_3-c_4,\\
B&=&c_3-2c_1c_2+c_1^3,\\
\varrho(c_1,c_2,c_3)&=&1+\dfrac{37}{10}c_1+\dfrac{2}{15}c_2^2+\dfrac{7}{20}c_1c_3\qquad\text{and}\\
\varsigma(c_1,c_2)&=&\dfrac{13}{5}\left(\dfrac{80}{39}c_1^2-c_2\right)+\dfrac{59}{30}c_1\left(\dfrac{72}{59}c_1^2-c_2\right)+\dfrac{23}{20}c_1^2\left(\dfrac{27}{46}c_1^2-c_2\right).
\end{eqnarray*}
Using Lemma~\ref{lemma5}, we have $|A|\leq 2$ and $|B|\leq 2.$ It can be easily noted that
\begin{equation*}
  |\varrho(c_1,c_2,c_3)|\leq 1+\dfrac{37}{10}|c_1|+\dfrac{2}{15}|c_2|^2+\dfrac{7}{20}|c_1||c_3|\leq \dfrac{31}{3}.
\end{equation*}
Now, let us consider $\varsigma(c_1,c_2)$ which can be again written in terms of $\zeta_1$ and $\zeta_2$ by using Lemma~\ref{lemmacho} as follows:
\begin{equation}\label{09}
  \varsigma_1(\zeta_1,\zeta_2)=\dfrac{2(1-\zeta_1^2)}{15}\left(\dfrac{121\zeta_1^2+85\zeta_1^3+12\zeta_1^4}{1-\zeta_1^2}-39\zeta_2-59\zeta_1\zeta_2-69\zeta_1^2\zeta_2\right).
\end{equation}
Note that for $\zeta_1=-1$ and $\zeta_1=1$ respectively
\begin{equation*}
  \delta_5=34/15\quad\text{and}\quad\delta_5=43/15.
\end{equation*}
It is clear that~\eqref{09} can be written as
\begin{equation*}
    \varsigma_1(\zeta_1,\zeta_2)=\dfrac{2(1-\zeta_1^2)}{15}\psi(A,B,C,M),
\end{equation*}
where $\psi(A,B,C,M)$ is given by~\eqref{psi} with
\begin{equation*}
A=\dfrac{121\zeta_1^2+85\zeta_1^3+12\zeta_1^4}{1-\zeta_1^2},\;B=-39-59\zeta_1-69\zeta_1^2,\;C=0\;\text{and}\;M=0.
\end{equation*}
Since $C=0,$ we consider the first case of the Lemma~\ref{maxlemma} and find out that $|B|\geq 2(|M|-|C|)$ on $(-1,1).$ Thus
\begin{eqnarray*}
 |\varsigma_1(\zeta_1,\zeta_2)|&\leq & \dfrac{1-\zeta_1^2}{15}\max\psi(A,B,C,M)\\
 &=&\dfrac{2(1-\zeta_1^2)}{15}(|A|+|B|+|C|)\\
 &=& \dfrac{2}{15} \left(-57 \zeta_1^4+26 \zeta_1^3+151 \zeta_1^2+59 \zeta_1+39\right)=\vartheta(\zeta_1),
\end{eqnarray*}
where
\begin{equation*}
  \vartheta(x):=\dfrac{2}{15} \left(-57 x^4+26 x^3+151 x^2+59 x+39\right),\quad\quad x\in(-1,1).
\end{equation*}
Using elementary calculus, we find out that $\vartheta(\zeta_1)$ attains its maximum value at $\zeta_1=1$ and thus
$|\varsigma_1(\zeta_1,\zeta_2)|\leq \vartheta(1)=436/15.$
Applying triangle inequality on~\eqref{17d}, we have
\begin{eqnarray*}
% \nonumber to remove numbering (before each equation)
  |\delta_5| &\leq & \dfrac{1}{5}|A|+\dfrac{11}{20}|B|+|\bar{\varrho}(c_1,c_2,c_3)|+|\bar{\varsigma}(c_1,c_2)| \\
  &\leq & \dfrac{1}{5}(2)+\dfrac{11}{10}(2)+\dfrac{31}{3}+\dfrac{436}{15} \\
   &=& \dfrac{630}{15}.
\end{eqnarray*}
 The inequality $|\delta_5|\leq 630/15$ is sharp since there exists an extremal function $\hat{k}_4\in\mathcal{F}_4,$ which is a solution of $z\hat{k}'_4(z)=z(1-z)^{-2}P_{1,0}(z)$.
\end{proof}
\section{Concluding Remarks}
\noindent Finding the initial coefficient bounds of functions belonging to a chosen class is a usual phenomenon but attracts special attention if some computational intricacies are resolved in achieving sharp results.  Further, the method or technique underuse matters a lot for further investigations in such problems. In the present work, the estimation of sharp bound for the fourth inverse coefficient gives rise to a maximization problem involving two complex and one real variable, which are eliminated using the maximization technique given by Choi et al.~\cite{max} and used by many other authors. On the other hand, for the fifth inverse coefficient, we obtain an expression involving four variables. The fifth inverse coefficient's sharp bound is mostly determined by splitting the expression and using the suitable  Carath\'{e}odary coefficient bounds results and the maximization method mentioned in the above technique. Thus, the bounds obtained with this technique are all sharp except the fifth inverse coefficient for functions in $\mathcal{F}_2.$ To overcome this, we derived a general formula of the fourth Carath\'{e}odary coefficient, but by using it, we could only improve the earlier bound, obtained by the previous technique used in other cases, for the fifth coefficient of functions in $\mathcal{F}_2$. Further, we found the range in which the sharp fifth inverse coefficient bound falls for functions in $\mathcal{F}_2$; however, locating the sharp bound is still open.

\end{document}